\theoremstyle{definition}
\newtheorem{theorem}{Theorem}
\newtheorem{thm}{Theorem}
\newtheorem{lem}{Lemma}
\newtheorem{defn}[theorem]{Definition}
\newtheorem*{remark}{Remark}
\newcommand{\y}{{ \mathscr{Y} }}
\title[]{Stable-Limit Non-symmetric Macdonald Functions in Type A}
\author[]{Milo James Bechtloff Weising \thanks{\href{mjbechtloffweising@ucdavis.edu}{mjbechtloffweising@ucdavis.edu}}\addressmark{1}}
\address{\addressmark{1}Department of Mathematics, University of California, Davis}  
\abstract{We construct and study an explicit simultaneous $\mathscr{Y}$ eigenbasis of Ion and Wu's standard representation of the $^+$stable-limit double affine Hecke algebra for the limit Cherednik operators $\mathscr{Y}_i$. This basis arises as a generalization of Cherednik's non-symmetric Macdonald polynomials of type $GL_n$. We utilize links between $^+$stable-limit double affine Hecke algebra theory of Ion and Wu and the double Dyck path algebra of Carlsson and Mellit that arose in their proof of the Shuffle Conjecture. As a consequence, the spectral theory for the limit Cherednik operators is understood.}
\keywords{stable-limit, Macdonald polynomials, double affine Hecke algebra, double Dyck path algebra, Cherednik operators}
\begin{document}

\maketitle

\section{Introduction}
This is a copy of the author's FPSAC 2023 submission. For the sake of satisfying the page limit for FPSAC most of the proofs are either only given as sketches or not given at all. The longer version with complete details will appear soon and possibly replace this version. 

The Shuffle Conjecture, now the Shuffle Theorem \cite{CM_2015}, is a combinatorial statement regarding the Frobenius character, $\mathcal{F}_{R_n}$, of the diagonal coinvariant algebra $R_n$ which generalizes the coinvariant algebra arising from the geometry of flag varieties. The following explicit formula is due to Haiman  \cite{HHLRU-2003}:
    $$ \mathcal{F}_{R_n}(X;q,t) = (-1)^n \nabla e_n[X] $$
    where the operator $\nabla$ is an eigenoperator on symmetric functions prescribed by its action on the modified Macdonald symmetric functions as
    $$\nabla \widetilde{H}_{\mu} = \widetilde{H}_{\mu}[-1]\cdot  \widetilde{H}_{\mu} .$$
    The original conjecture of Haglund, Haiman, Loehr, Remmel, and Ulyanov states the following:
        
\begin{thm}[Shuffle Theorem]\cite{HHLRU-2003}

$$ (-1)^n \nabla e_n[X] = \sum_{\pi}\sum_{ w \in WP_{\pi}} t^{\text{area}(\pi)} q^{\text{dinv}(\pi,w)} x_{w}. $$

\end{thm}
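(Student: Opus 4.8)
The plan is to deduce the stated identity from a stronger, refined result --- the \emph{Compositional Shuffle Conjecture} of Haglund, Morse and Zabrocki --- and to prove that refinement using the double Dyck path algebra. First I would group the parking functions on the right-hand side by their \emph{touch composition}: writing $\alpha=(\alpha_1,\dots,\alpha_\ell)\models n$ for the sequence of lengths between successive returns of $\pi$ to the main diagonal, the combinatorial sum factors as $\sum_{\alpha\models n}\big(\sum_{\mathrm{touch}(\pi)=\alpha}\sum_{w}t^{\mathrm{area}(\pi)}q^{\mathrm{dinv}(\pi,w)}x_w\big)$. The refined claim identifies each inner sum with $\nabla\,C_{\alpha_1}\cdots C_{\alpha_\ell}\cdot 1$, where the $C_a$ are the symmetric-function operators of Haglund--Morse--Zabrocki. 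Granting this, the theorem follows from the plethystic identity $\sum_{\alpha\models n}C_{\alpha_1}\cdots C_{\alpha_\ell}\cdot 1=(-1)^n e_n[X]$ together with the linearity of $\nabla$, since then $(-1)^n\nabla e_n[X]=\sum_\alpha \nabla\,C_\alpha\cdot 1$ reassembles the full parking-function sum.

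The core of the argument is the construction and study of the double Dyck path algebra $\mathbb{A}_{q,t}$ together with its standard representation on a graded module $V_\bullet=\bigoplus_{k\ge 0}V_k$, where $V_k$ is a polynomial ring in $k$ extra variables over the symmetric functions $\Lambda$. I would equip $V_\bullet$ with raising and lowering operators $d_+\colon V_k\to V_{k+1}$ and $d_-\colon V_k\to V_{k-1}$, Demazure--Lusztig operators $T_1,\dots,T_{k-1}$ acting within each $V_k$, and the multiplication and commutation generators relating them. The crucial structural step is to verify that these explicit operators satisfy all defining relations of $\mathbb{A}_{q,t}$, so that $V_\bullet$ is a bona fide module; this is exactly the representation-theoretic input that ties into the $^+$stable-limit double affine Hecke algebra developed later in the paper.

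Next I would realize both sides of the refined identity as matrix coefficients in $V_\bullet$. On the combinatorial side I would set up a recursion for the generating function $\sum_{\mathrm{touch}(\pi)=\alpha}\sum_w t^{\mathrm{area}}q^{\mathrm{dinv}}x_w$ in which prepending a return of length $a$ to the diagonal corresponds to a fixed word in $d_+$, $d_-$ and the Hecke operators; reading the same word on the symmetric-function side should compute $\nabla\,C_{\alpha_1}\cdots C_{\alpha_\ell}\cdot 1$. Matching these two descriptions then reduces the whole statement to the base case $n=0$ together with the algebra relations already established, closing the induction on $n$.

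The hard part will be twofold and concentrated in the second paragraph: proving that the candidate operators genuinely obey the relations of $\mathbb{A}_{q,t}$ (so that the standard representation is consistent), and, in tandem, pinning down the precise word in these generators that implements the abstractly defined operator $\nabla$. Because $\nabla$ is specified only through its eigenvalues on the modified Macdonald basis $\widetilde{H}_\mu$, translating it into an explicit composition of $d_\pm$ and $T_i$ requires careful control of the plethystic action; once this dictionary is established, the compositional refinement, and hence the Shuffle Theorem, follows.
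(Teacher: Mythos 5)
This statement is quoted in the paper as background: it is the Haglund--Haiman--Loehr--Remmel--Ulyanov conjecture, proved by Carlsson and Mellit in \cite{CM_2015}, and the paper offers no proof of its own to compare against. Your outline is, in essence, a faithful summary of the Carlsson--Mellit argument: reduce to the compositional refinement of Haglund--Morse--Zabrocki via the touch decomposition and the identity expressing $e_n$ as $\sum_{\alpha\models n}C_{\alpha_1}\cdots C_{\alpha_\ell}\cdot 1$ (the sign $(-1)^n$ is a bookkeeping artifact of the convention $\nabla\widetilde{H}_\mu=\widetilde{H}_\mu[-1]\,\widetilde{H}_\mu$ used here), then realize both sides of the refined identity inside the standard representation of $\mathbb{A}_{q,t}$ and match recursions. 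You have also correctly located where the real work lies: verifying the $\mathbb{A}_{q,t}$ relations for the explicit operators on $V_\bullet$. One caveat on the last step: in \cite{CM_2015} the operator $\nabla$ is not literally realized as a word in $d_\pm$ and the $T_i$; rather, it enters through an intertwining statement -- $\nabla$ conjugates one family of generators of the standard representation into another (roughly, it exchanges the roles of the $y$-variables and the Hall--Littlewood raising operators), and the recursion for $\nabla C_\alpha\cdot 1$ is derived from that symmetry rather than from an explicit expression for $\nabla$. Your sketch would need to be adjusted accordingly, but at this level of detail the strategy is sound and is the accepted route to the theorem.
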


In the above, $\pi$ ranges over the set of Dyck paths of length $n$ and $WP_{\pi}$ is the set of word parking functions corresponding to $\pi$. The values $area(\pi)$ and $dinv(\pi,w)$ are certain statistics corresponding to $\pi$ and $w \in WP_{\pi}$.

In \cite{CM_2015}, Carlsson and Mellit prove the Compositional Shuffle Conjecture, a generalization of the original Shuffle Conjecture. The authors construct and investigate a quiver path algebra, $\mathbb{A}_{q,t}$,  called the Double Dyck Path algebra. They construct a representation of $\mathbb{A}_{q,t}$, called the standard representation, built on certain mixed symmetric and non-symmetric polynomial algebras with actions from Demazure-Lusztig operators, Hall-Littlewood creation operators, and plethysms. The Compositional Shuffle Conjecture 
falls out after a rich understanding of the standard representation is developed. Later analysis done by Carlsson, Gorsky, and Mellit \cite{GCM_2017} showed that in fact $\mathbb{A}_{q,t}$ occurs naturally in the context of equivariant cohomology of Hilbert schemes. 

Recent work by Ion and Wu \cite{Ion_2022} has made progress in linking the work of Carlsson and Mellit on $\mathbb{A}_{q,t}$ to the representation theory of double affine Hecke algebras. Ion and Wu introduce the $^+$stable-limit double affine Hecke algebra $\mathscr{H}^{+}$ along with a representation $\mathscr{P}_{as}^{+}$ of $\mathscr{H}^{+}$ from which one can recover the standard $\mathbb{A}_{q,t}$ representation. The main obstruction in making a stable-limit theory for the double affine Hecke algebras is the lack of an inverse system of the double affine Hecke algebras in the traditional sense. Ion and Wu get around this obstruction by introducing a new notion of convergence (Defn. \ref{defn5}) for sequences of polynomials with increasing numbers of variables along with limit versions of the standard Cherednik operators defined by this convergence. 

Central to the study of the standard Cherednik operators are the non-symmetric Macdonald polynomials. The non-symmetric Macdonald polynomials in full generality were introduced first by Cherednik \cite{C_2001} in the context of proving the Macdonald constant-term conjecture. The introduction of the double affine Hecke algebra, along with the non-symmetric Macdonald polynomials by Cherednik, constituted a significant development in representation theory. They serve as a non-symmetric counterpart to the symmetric Macdonald polynomials introduced by Macdonald as a q,t-analog of Schur functions. Further, they give an orthogonal basis of the polynomial representation consisting of weight vectors for the Cherednik operators. In particular, the correct choice of symmetrization applied to a non-symmetric Macdonald polynomial will yield its symmetric counterpart. The type A symmetric Macdonald polynomials are a remarkable basis of symmetric polynomials simultaneously generalizing many other well studied bases which can be recovered by appropriate specializations of values for q and t. The aforementioned modified Macdonald functions $\widetilde{H}_{\mu}$ can be obtained via a plethystic transformation from the symmetric Macdonald polynomials in sufficiently many variables. The spectral theory of non-symmetric Macdonald polynomials is well understood using the combinatorics of affine Weyl groups.

It is natural to seek an asymptotic extension for the non-symmetric Macdonald polynomials following the methods of Ion and Wu. In particular, does the standard $\mathscr{H}^{+}$ representation $\mathscr{P}_{as}^{+}$ have a basis of weight vectors for the limit Cherednik operators $\y_i$? The main result, Theorem \ref{thm7}, of this paper answers this question in the affirmative.

The strategy for finding a basis of weight vectors for the limit Cherednik operators $\mathscr{Y}_i$ is the following. First, we show that the non-symmetric Macdonald polynomials have stable-limits in the sense that if we start with a composition $\mu$ and consider the compositions $\mu * 0^m$ for $m \geq 0$ then the corresponding sequence of non-symmetric Macdonald polynomials $E_{\mu * 0^m}$ converges to an element $\widetilde{E}_{\mu}$ of $\mathscr{P}_{as}^{+}$. Next, we show that these limits of non-symmetric Macdonald polynomials are $\mathscr{Y}$-weight vectors. Importantly, the newly constructed set of $\widetilde{E}_{\mu}$ do \textbf{\textit{not}} span $\mathscr{P}_{as}^{+}$. To fill in these gaps, the lowering operators $d_{-}$ from $\mathbb{A}_{q,t}$ are used to create enough $\y$ weight vectors to span $\mathscr{P}_{as}^{+}$. Finally, a symmetrization operator is used to show that the spanning set obtained from this process is actually a basis in Theorem \ref{thm7}.

Lemma \ref{lem1}, Theorem \ref{thm5}, and Lemma \ref{lem5} together give a description of the weights across all weight vectors in $\mathscr{P}_{as}^{+}$. The author would like to thank the FPSAC referees who alerted the author to an unpublished work of Ion and Wu which independently determines the same explicit description of these eigenvalues.

\section{Definitions and Notation}

\subsection{Double Affine Hecke Algebras in Type GL}

\begin{defn} \label{defn1}
Define the \textbf{\textit{double affine Hecke algebra}} $\mathscr{H}_n$ to be the $\mathbb{Q}(q,t)$-algebra generated by $T_1,\ldots,T_{n-1}$, $X_1^{\pm 1},\ldots,X_{n}^{\pm 1}$, and $Y_1^{\pm 1},\ldots,Y_n^{\pm 1}$ with the following relations:

\begin{multicols}{2}
\begin{itemize}
    \item [(i)] $(T_i -1)(T_i +t) = 0$,
    \item [] $T_iT_{i+1}T_i = T_{i+1}T_iT_{i+1}$,
    \item [] $T_iT_j = T_jT_i$, $|i-j|>1$,
    \item [(ii)] $T_i^{-1}X_iT_i^{-1} = t^{-1}X_{i+1}$,
    \item []$T_iX_j = X_jT_i$, $i \notin \{j,j+1\}$,
    \item []$X_iX_j = X_jX_i$,
    \item [(iii)]$T_iY_iT_i = tY_{i+1}$,
    \item []$T_iY_j = Y_jT_i$, $i\notin \{j,j+1\}$,
    \item []$Y_iY_j = Y_jY_i$,
    \item [(iv)]$Y_1T_1X_1 = X_2Y_1T_1$,
    \item [(v)]$Y_1X_1\cdots X_n = qX_1\cdots X_nY_1$
    \item []
\end{itemize}
\end{multicols}

Further, define the special element $\omega_n$ by $$\omega_n := T_{n-1}^{-1}\cdots T_1^{-1}Y_1^{-1}$$

\end{defn}

\subsubsection{Standard DAHA representation}

\begin{defn} \label{defn2}
Let $\mathscr{P}_n = \mathbb{Q}(q,t)[x_1^{\pm 1},\ldots,x_n^{\pm 1}]$. The \textbf{\textit{ standard representation of $\mathscr{H}_n$}} is given by the following action on $\mathscr{P}_n$:

\begin{center}
\begin{varwidth}{\textwidth}
\begin{itemize}
    \item $T_if(x_1,\ldots,x_n) = s_i f(x_1,\ldots,x_n) +(1-t)x_i \frac{1-s_i}{x_i-x_{i+1}}f(x_1,\ldots,x_n)$
    \item $X_if(x_1,..,x_n)= x_if(x_1,\ldots,x_n)$
    \item $\omega_nf(x_1,\ldots,x_n) = f(q^{-1}x_n,x_1,\ldots,x_{n-1})$
\end{itemize}
\end{varwidth}
\end{center}

Here $s_i$ denotes the operator that swaps the variables $x_i$ and $x_{i+1}$. Under this action the $T_i$ operators are known as the \textbf{\textit{Demazure-Lusztig operators}}. For q,t generic $\mathscr{P}_n$ is known to be a faithful representation of $\mathscr{H}_n$. The action of the elements $Y_1,\ldots,Y_n \in \mathscr{H}_n$ are called \textbf{\textit{Cherednik operators}}.
\end{defn}

Set $\mathscr{H}_n^{+}$ to be the positive part of $\mathscr{H}_n$ i.e. the subalgebra generated by $T_1,\ldots,T_{n-1}$, $X_1,\ldots,X_n$, and $Y_1,\ldots,Y_n$ without allowing for inverses in the $X$ and $Y$ elements and set $\mathscr{P}_n^{+} = \mathbb{Q}(q,t)[x_1,\ldots,x_n]$. Importantly, 
$\mathscr{P}_n^{+}$ is a $\mathscr{H}_n^{+}$ submodule of $\mathscr{P}_n$. 

\subsubsection{Non-symmetric Macdonald Polynomials and Symmetric Functions}

\begin{defn} \label{defn3}
The \textbf{\textit{non-symmetric Macdonald polynomials}} (for $GL_n$) are a family of Laurent polynomials $E_{\mu} \in \mathscr{P}_n$ for $\mu \in \mathbb{Z}^n$ uniquely determined by the following:

\begin{itemize}
    \item Triangularity: Each $E_{\mu}$ has a monomial expansion of the form $E_{\mu} = x^{\mu} + \sum_{\lambda < \mu} a_{\lambda}x^{\lambda}$
    where $"<"$ denotes the Bruhat order for $\mathbb{Z}^n$
    
    \item Weight Vector: Each  $E_{\mu}$ is a weight vector for the operators $Y_1,\ldots,Y_n \in \mathscr{H}_n$.
\end{itemize}
\end{defn}

The non-symmetric Macdonald polynomials are a $Y$ weight basis for the $\mathscr{H}_n$ standard representation $\mathscr{P}_n$. For $\mu \in \mathbb{Z}^n$, $E_{\mu}$ is homogeneous with degree $\mu_1+\cdots +\mu_n$. Further, the set of $E_{\mu}$ corresponding to $\mu \in \mathbb{Z}_{\geq 0}^n$ gives a basis for $\mathscr{P}_n ^{+}$.

\begin{defn}
 In this paper, a \textbf{\textit{composition}} will refer to a finite tuple $\mu = (\mu_1,\ldots,\mu_n)$ of non-negative integers. We allow for the empty composition $\emptyset$ with no parts. The length of a composition $\mu = (\mu_1,\ldots,\mu_n)$ is $\ell(\mu) = n$ and the size of the composition is $| \mu | = \mu_1+\ldots+\mu_n$. Given two compositions $\mu = (\mu_1,\ldots,\mu_n)$ and $\beta = (\beta_1,\ldots,\beta_m)$, define $\mu * \beta = (\mu_1,\ldots,\mu_n,\beta_1,\ldots,\beta_m)$. A \textbf{\textit{partition}} is a composition $\lambda = (\lambda_1,\ldots,\lambda_n)$ with $\lambda_1\geq \ldots \geq \lambda_n \geq 1$. We denote $sort(\mu)$ to be the partition obtained by ordering the nonzero elements of $\mu$ in weakly decreasing order. 
Define the \textbf{\textit{ring of symmetric functions}} $\Lambda$ to be the inverse limit of the symmetric polynomial rings $\mathbb{Q}(q,t)[x_1,\ldots,x_n]^{S_n}$ with respect to the quotient maps sending $x_n \rightarrow 0$.
In this paper we use plethystic notation. For a complete introduction and explanation of plethysm we refer the reader to \cite{Stanley}. For example, if $F \in \Lambda$ and $\{t_1,t_2,\ldots\}$ is a set of independent variables, then we write $F[t_1+t_2+\cdots]$ for the symmetric function given by F with variables in the set $\{ t_1,t_2,\ldots\}$. We will in a few instances use the notation $\mathbbm{1}(p)$ to denote the value $1$ if the statement p is true and $0$ otherwise. 

\end{defn}

\subsection{Stable-Limit DAHA of Ion and Wu}
 
\begin{defn} \label{defn4}
The \textit{\textbf{$^+$stable-limit double affine Hecke algebra}} of Ion and Wu, $\mathscr{H}^{+}$, is the algebra  generated over $\mathbb{Q}(q,t)$ by the elements $T_i,X_i,Y_i$ for $i \in \mathbb{N}$ satisfying the following relations:

\begin{center}
\begin{varwidth}{\textwidth}
\begin{itemize}
    \item The generators $T_i,X_i$ for $i \in \mathbb{N}$ satisfy (i) and (ii) of Defn. \ref{defn1}.
    \item The generators $T_i,Y_i$ for $i \in \mathbb{N}$ satisfy (i) and (iii) of Defn. \ref{defn1}.
    \item  $Y_1T_1X_1 = X_2Y_1T_1$
 \end{itemize}
 \end{varwidth}
 \end{center}
 
 \end{defn}

We include Ion and Wu's full definition of convergence in Defn. \ref{defn5} for the sake of completeness. A full understanding of convergence is not required to follow  the rest of this paper. 

\begin{defn}\cite{Ion_2022} \label{defn5}
 Let $\mathscr{P}(k)^{+} := \mathbb{Q}(q,t)[x_1,\ldots,x_k]\otimes \Lambda[x_{k+1}+x_{k+2}+\ldots]$. Define the \textbf{\textit{ring of almost symmetric functions}} $\mathscr{P}_{as}^{+} := \bigcup_{k\geq 0} \mathscr{P}(k)^{+}$. Further, let $\mathscr{P}_{\infty}^{+}$ denote the inverse limit of the rings $\mathscr{P}_{k}^{+}$ with respect to the homomorphisms which send $x_{k+1}$ to 0 at each step. Note $\mathscr{P}_{as}^{+} \subset \mathscr{P}_{\infty}^{+}.$ Define $\rho: \mathscr{P}_{as}^{+} \rightarrow x_1\mathscr{P}_{as}^{+}$ to be the linear map defined by $\rho(x_1^{a_1}\cdots x_n^{a_n}F[x_{m}+x_{m+1}+\ldots]) = \mathbbm{1}(a_1 > 0) x_1^{a_1}\cdots x_n^{a_n}F[x_{m}+x_{m+1}+\ldots] $ for $F \in \Lambda$. Let $(f_k)_{k \geq 1}$ be a sequence of polynomials with $f_k \in \mathscr{P}_k^{+}$. Then the sequence $(f_k)_{k \geq 1}$ is \textbf{\textit{convergent}} if there exist some N and auxiliary sequences $(h_k)_{k\geq1}$, $(g^{(i)}_k)_{k\geq1}$, and $(a^{(i)}_k)_{k\geq 1}$ for $1\leq i \leq N$ with $h_k, g^{(i)}_k \in \mathscr{P}_{k}^{+}$, $a^{(i)}_k \in \mathbb{Q}(q,t)$ with the following properties:

\begin{itemize}
    \item For all k, $f_k = h_k + \sum_{i=1}^{N} a^{(i)}_k g^{(i)}_k$.
    \item The sequences $(h_k)_{k\geq1}$, $(g^{(i)}_k)_{k\geq1}$ for $1\leq i \leq N$ converge in $\mathscr{P}_{\infty}^{+}$ with limits $h,g^{(i)}$ respectively. Further, $g^{(i)} \in \mathscr{P}_{as}^{+}$.
    \item The sequences $a^{(i)}_k$ for $1\leq i \leq N$ converge with respect to the t-adic topology on $\mathbb{Q}(q,t)$ with limits $a^{(i)}$ which are required to be in $\mathbb{Q}(q,t)$.
\end{itemize}

The sequence is said to have a limit given by
$\lim_{k} f_k = h + \sum_{i=1}^{N}a^{(i)}g^{(i)}.$

\end{defn}

Ion and Wu use their definition of convergence to define asymptotic versions of the Cherednik operators.

\begin{thm}\cite{Ion_2022}
 Consider the sequence of operators $\widetilde{Y}_1^{(n)} := t^n \rho \circ Y_1^{(n)}$ where $Y_1^{(n)}$ is the operator coming from the action of $Y_1 \in \mathscr{H}_n^{+}$ on $\mathscr{P}_{n}^{+}$. Let  $\pi_n: \mathscr{P}_{as}^{+} \rightarrow \mathscr{P}_{n}^{+}$ be the canonical projection and let $f \in \mathscr{P}_{as}^{+}$. Then the sequence $(\widetilde{Y}_1^{(n)}\circ \pi_n(f))_{n \geq 1}$ is convergent with limit which is also almost symmetric. This yields a well-defined operator $\mathscr{Y}_1: \mathscr{P}_{as}^{+} \rightarrow \mathscr{P}_{as}^{+}$ given by $\mathscr{Y}_1(f) := \lim_n \widetilde{Y}_1^{(n)}\circ \pi_n(f)$. Further, the operator $\mathscr{Y}_1$ along with the Demazure-Lusztig action of the $T_i$'s and multiplication by the $X_i$'s generate an $\mathscr{H}^{+}$ action on $\mathscr{P}_{as}^{+}$.
\end{thm}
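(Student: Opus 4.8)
The plan is to test convergence on a convenient spanning set, extract a usable closed form for the finite Cherednik operators on that set, and then track $t$-adic valuations to pass to the limit; the algebra relations are handled separately at the end. By linearity it suffices to verify convergence of $(\widetilde{Y}_1^{(n)}\circ\pi_n(f))_{n\ge 1}$ when $f = x_1^{a_1}\cdots x_k^{a_k}\,F[x_{k+1}+x_{k+2}+\cdots]$ with $F\in\Lambda$, since such elements span $\mathscr{P}(k)^{+}$ and hence all of $\mathscr{P}_{as}^{+}=\bigcup_k\mathscr{P}(k)^{+}$. For such $f$ the projection $\pi_n(f)=x_1^{a_1}\cdots x_k^{a_k}\,F(x_{k+1},\dots,x_n)$ is symmetric in $x_{k+1},\dots,x_n$. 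I would compute $Y_1^{(n)}\pi_n(f)$ using the factorization $Y_1=\omega_n^{-1}T_{n-1}^{-1}\cdots T_1^{-1}$ extracted from the definition of $\omega_n$, together with the explicit actions of Defn.\ \ref{defn2}. The key structural simplification is that a Demazure--Lusztig operator acts as the identity on any function symmetric in its two variables; consequently the factors $T_{k+1}^{-1},\dots,T_{n-1}^{-1}$ (applied after $T_1^{-1},\dots,T_k^{-1}$) only ever ``thread'' a single variable through the symmetric block $x_{k+1},\dots,x_n$, one transposition at a time, producing a Hall--Littlewood-type expression whose $n$-dependence is entirely through the symmetric polynomial $F(x_{k+1},\dots,x_n)$ and through explicit powers of $q$ and $t$.

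With this closed form in hand, convergence in the sense of Defn.\ \ref{defn5} should follow by bookkeeping. Each inverse Demazure--Lusztig factor contributes $t$-powers of valuation at least $-1$, so the product $T_{n-1}^{-1}\cdots T_1^{-1}$ has valuation at least $-(n-1)$; multiplying by $t^{n}$ then produces coefficients of valuation bounded below uniformly in $n$, with the low-order part stabilizing while higher-order corrections acquire valuation growing with $n$. This yields both the $t$-adic convergence of the scalar coefficients and the $\mathscr{P}_{\infty}^{+}$-convergence of the symmetric polynomial parts $F(x_{k+1},\dots,x_n)\to F[x_{k+1}+\cdots]$, i.e.\ exactly the data $(h_k),(g^{(i)}_k),(a^{(i)}_k)$ required by Defn.\ \ref{defn5}. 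Heuristically this matches the spectral picture: on a non-symmetric Macdonald polynomial $E_\nu$ the operator $Y_1^{(n)}$ acts by a scalar of the form $q^{\nu_1}t^{1-r}$ for a rank statistic $r=r_\nu(1)$, and $t^n q^{\nu_1}t^{1-r}$ stabilizes precisely when $\nu_1>0$ while decaying like $t^n$ when $\nu_1=0$. The projection $\rho$, whose image is $x_1\mathscr{P}_{as}^{+}$, is exactly what discards the decaying $x_1$-free contributions; since at finite level $\widetilde{Y}_1^{(n)}$ lands in the $x_1$-divisible part, the limit lies in $x_1\mathscr{P}_{as}^{+}\subset\mathscr{P}_{as}^{+}$ and is therefore almost symmetric.

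It remains to check that $\y_1$, the multiplication operators $X_i$, and the Demazure--Lusztig operators $T_i$ satisfy the relations of $\mathscr{H}^{+}$ in Defn.\ \ref{defn4}. The relations among the $T_i$ and $X_i$ are inherited verbatim, since on $\mathscr{P}_{as}^{+}$ these operators are the evident extensions of their finite-variable counterparts and each relation involves only finitely many indices; the higher operators $\y_{i+1}:=t^{-1}T_i\,\y_i\, T_i$ are then defined through relation (iii). The genuinely new content is the mixed relation $\y_1 T_1 X_1 = X_2\,\y_1 T_1$, together with the consistency needed for the derived $\y_i$ to commute and to satisfy $T_i\y_j=\y_j T_i$ for $i\notin\{j,j+1\}$. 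The natural route is to start from the finite-level identities $Y_1^{(n)}T_1X_1=X_2\,Y_1^{(n)}T_1$, which hold in $\mathscr{H}_n^{+}$, and pass to the limit.

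The main obstacle is precisely this last step. Because $\widetilde{Y}_1^{(n)}=t^n\rho\circ Y_1^{(n)}$ carries the projection $\rho$, and $\rho$ does \emph{not} commute with $X_1,X_2$ or $T_1$, one cannot simply quote the finite-level relation: on the two sides of the desired identity the $\rho$ sits in different positions relative to $X_1,X_2,T_1$. The crux is therefore to show that the commutators of $\rho$ with these operators, after the $t^n$ scaling, contribute only terms whose $t$-adic valuation tends to infinity and hence vanish in the limit. I expect this to require a careful analysis of how $\rho$ interacts with multiplication by $x_1$ and with $T_1$ on the explicit threaded expressions above, and it is here that the particular normalization $t^n\rho$ is essential rather than incidental.
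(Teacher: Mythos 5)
First, note that the paper does not prove this statement at all: it is quoted verbatim from \cite{Ion_2022}, so there is no in-paper argument to compare yours against. Judged on its own terms, your proposal correctly identifies the right skeleton (reduce to the spanning set $x_1^{a_1}\cdots x_k^{a_k}F[x_{k+1}+\cdots]$, use $Y_1=\omega_n^{-1}T_{n-1}^{-1}\cdots T_1^{-1}$, exploit that $T_i^{-1}$ acts as the identity on functions symmetric in $x_i,x_{i+1}$ so that only one variable is threaded through the symmetric block), but it is not a proof; it has two genuine gaps. The first is the convergence step itself. Your valuation bookkeeping shows only that the coefficients of $t^n\rho\circ Y_1^{(n)}\pi_n(f)$ have $t$-adic valuation bounded below uniformly in $n$; that is far from convergence in the sense of Definition~\ref{defn5}, which requires exhibiting the decomposition $f_k=h_k+\sum_i a_k^{(i)}g_k^{(i)}$ with the $g_k^{(i)}$ stabilizing in $\mathscr{P}_\infty^{+}$ and the scalars $a_k^{(i)}$ converging $t$-adically. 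The sentence ``the low-order part stabilizing while higher-order corrections acquire valuation growing with $n$'' is precisely the content of the theorem, and your argument supplies no mechanism forcing it: a bounded-below sequence of valuations can oscillate forever. This is where the actual work of Ion and Wu lives (an explicit analysis of the threaded expression, which is where the Hall--Littlewood/Jing-operator structure underlying $\partial_-^{(k)}$ comes from), and it cannot be waved through.

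The second gap you flag yourself: the verification that $\mathscr{Y}_1$, the $T_i$, and the $X_i$ satisfy the relations of Definition~\ref{defn4}, in particular $\mathscr{Y}_1T_1X_1=X_2\mathscr{Y}_1T_1$ and the commutativity of the derived operators $\mathscr{Y}_{i+1}=t^{-1}T_i\mathscr{Y}_iT_i$. You correctly diagnose that the finite-level relations cannot be transported directly because $\rho$ fails to commute with $X_1$, $X_2$, and $T_1$, but you then stop at ``I expect this to require a careful analysis.'' Identifying the obstacle is not the same as overcoming it; one must actually show that the commutators of $\rho$ with these operators, after the $t^n$ rescaling, have $t$-adic valuation tending to infinity, and this again requires the explicit form of $Y_1^{(n)}\pi_n(f)$ that the first part never pins down. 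A smaller but real loose end: almost-symmetry of the limit does not follow merely from the limit lying in the image of $\rho$, since limits in Definition~\ref{defn5} a priori live only in $\mathscr{P}_\infty^{+}$; this too needs the explicit stabilized form. In short, the outline is sound and consistent with the known proof strategy, but both substantive claims of the theorem are asserted rather than established.
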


\section{Stable-Limits of Non-symmetric Macdonald Polynomials}

Given a composition $\mu$, consider the compositions $\mu * 0^m$ for $m\geq 0$ and the corresponding sequence of non-symmetric Macdonald polynomials $(E_{\mu*0^m})_{m \geq 0}$.
In order to prove the convergence of these sequences we use the following result of \cite{Haglund_2004} giving an explicit combinatorial formula for the non-symmetric Macdonald polynomials. Note that the $q,t$ conventions in \cite{Haglund_2004} differ from those appearing in this paper. In the below theorem the appropriate translation $q \rightarrow q^{-1}$ has been made. 

\begin{thm}\cite{Haglund_2004} \label{HHL}
For a composition $\mu$ with $\ell(\mu) = n$ the following holds:
    $$E_{\mu} = \sum_{\substack{\sigma: \mu \rightarrow [n]\\ \text{non-attacking}}} X^{\sigma}q^{-maj(\hat{\sigma})}t^{coinv(\hat{\sigma})} \prod_{\substack{u \in dg'(\mu) \\ \hat{\sigma}(u) \neq \hat{\sigma}(d(u))}} \left( \frac{1-t}{1-q^{-(\ell(u)+1)}t^{(a(u)+1)}} \right) $$
\end{thm}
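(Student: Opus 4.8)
Write $F_\mu$ for the right-hand side of the claimed identity, i.e. the sum over non-attacking fillings $\sigma\colon \mu \to [n]$ with the indicated monomial, statistic, and arm/leg weights. The plan is \emph{not} to attack the triangularity and $Y$-eigenvector conditions of Definition \ref{defn3} head-on, but instead to use the fact that those two conditions are equivalent to a short list of recurrences that determine the $E_\mu$ uniquely starting from the constant polynomial, and then to verify that $F_\mu$ obeys the very same recurrences. Concretely, the Knop--Sahi intertwiner theory for $\mathscr{H}_n$ shows that $(E_\mu)_\mu$ is the unique family satisfying: (i) $E_{0^n} = 1$; (ii) an affine ``cycling'' relation expressing $E_{(\mu_2,\ldots,\mu_n,\mu_1+1)}$ as the cyclic variable shift $\omega_n$ of Definition \ref{defn1} applied to $E_\mu$, up to multiplication by a variable and a power of $q$; and (iii) for each $i$ with $\mu_i \neq \mu_{i+1}$, a Demazure--Lusztig intertwiner relation of the form $E_{s_i\mu} = (T_i + c_i)E_\mu$, with $c_i$ an explicit scalar built from the spectral data of $Y_i,Y_{i+1}$. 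Since every composition is reached from $0^n$ by finitely many applications of the cycling map and the $s_i$, it suffices to establish (i)--(iii) for $F_\mu$.

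First I would dispatch the base case: for $\mu=0^n$ the diagram $dg'(\mu)$ is empty, there is a single (vacuous) non-attacking filling, the product over cells is empty, and $maj$ and $coinv$ vanish, so $F_{0^n}=1$. Next I would treat the affine recurrence (ii). On the combinatorial side, passing from $\mu$ to $(\mu_2,\ldots,\mu_n,\mu_1+1)$ corresponds to a transparent operation on augmented diagrams: cyclically relabel the columns and append one new cell to the cycled column. Under this operation the monomial weight $X^\sigma$, the arm $a(u)$ and leg $\ell(u)$ of each surviving cell, and the non-attacking condition all transform in a controlled way, and tracking the induced change in $maj(\hat\sigma)$ and $coinv(\hat\sigma)$ matches the $q$-power and the $\omega_n$-twist on the polynomial side. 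This step is bookkeeping rather than deep.

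The main obstacle is the Demazure--Lusztig recurrence (iii): I must show that applying $T_i$ in its explicit form from Definition \ref{defn2} to $F_\mu$ reproduces the intertwiner combination equal to $F_{s_i\mu}$, up to the known scalar. The difficulty is that $T_i$ mixes the adjacent variables $x_i,x_{i+1}$ through the divided-difference term $(1-t)x_i\frac{1-s_i}{x_i - x_{i+1}}$, whereas the sum defining $F_\mu$ is organized by non-attacking fillings and the non-attacking condition couples entries across rows and along diagonals, so it is \emph{not} preserved cell-by-cell when the two relevant columns are swapped. The way I would control this is to build a weight-preserving pairing on fillings: group the non-attacking fillings of $\mu$ by their restriction away from columns $i,i+1$, and for each group exhibit an explicit matching between fillings whose entries in those two columns differ by a local swap, showing that the divided-difference term exactly accounts for the discrepancy between the grouped contributions and the contributions coming from $s_i\mu$. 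The key quantitative input is that the arm/leg factors $\frac{1-t}{1-q^{-(\ell(u)+1)}t^{a(u)+1}}$ attached to the exchanged cells collapse, together with the changes in $maj$ and $coinv$, into precisely the coefficient predicted by the Hecke relation $(T_i-1)(T_i+t)=0$ and the eigenvalue ratio of $Y_i,Y_{i+1}$. Verifying this collapse, with all three statistics handled simultaneously, is the crux of the whole argument.

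Having checked (i)--(iii), uniqueness of the solution to the recurrence system forces $F_\mu = E_\mu$ for every composition $\mu$. As an independent cross-check I would also confirm the triangularity half of Definition \ref{defn3} directly: the filling constant along each column (agreeing with the basement) contributes the leading monomial $x^\mu = x_1^{\mu_1}\cdots x_n^{\mu_n}$ with coefficient $1$ and empty arm/leg product, while a standard bound on the content of any other non-attacking filling shows every remaining monomial $x^\lambda$ has $\lambda<\mu$ in Bruhat order. This reconciles the combinatorial formula with the characterization of $E_\mu$ and completes the proof.
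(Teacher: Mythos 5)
This theorem is quoted by the paper from \cite{Haglund_2004} without proof (apart from the remark that the substitution $q \rightarrow q^{-1}$ has been applied to match conventions), so there is no in-paper argument to measure your proposal against; the relevant comparison is with the original Haglund--Haiman--Loehr proof. Your overall strategy --- characterize $E_\mu$ uniquely by the base case $E_{0^n}=1$, the Knop--Sahi affine cycling relation, and the Demazure--Lusztig intertwiner relations, then verify that the combinatorial sum $F_\mu$ satisfies the same three recurrences --- is indeed the shape of their argument, and your treatment of the base case and of the cycling step (ii) is plausible bookkeeping.

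The genuine gap is step (iii). You correctly identify that verifying $E_{s_i\mu} = (T_i + c_i)E_\mu$ for the combinatorial sum is the crux, but you only describe a plan (``build a weight-preserving pairing on fillings\ldots showing that the divided-difference term exactly accounts for the discrepancy'') without exhibiting the pairing, computing how $maj$, $coinv$, and the arm/leg factors transform under it, or proving the claimed collapse to the Hecke-theoretic scalar. Essentially all of the content of the theorem lives in that verification: in the source it occupies the bulk of the proof and requires a careful reduction to a local (two-column) computation together with nontrivial identities among the weights $\frac{1-t}{1-q^{-(\ell(u)+1)}t^{a(u)+1}}$, and it is exactly the step where a plausible-looking pairing can fail because the non-attacking condition couples entries across rows. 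As written, your proposal is a correct roadmap rather than a proof. Two smaller points: the intertwiner relation is only available when $\mu_i \neq \mu_{i+1}$, so you should check explicitly that the cycling map together with such $s_i$ reaches every composition in $\mathbb{Z}_{\geq 0}^{n}$ from $0^n$ (true, but it needs saying); and since the paper's statement has $q$ inverted relative to \cite{Haglund_2004}, the entire verification must be carried out consistently in one convention.
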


The combinatorial description of non-symmetric Macdonald polynomials in the Haiman-Haglund-Loehr formula relies on the combinatorics of \textbf{\textit{non-attacking labellings}} of certain box diagrams corresponding to compositions. In the interest of space we refer the reader to \cite{Haglund_2004} for all the notation used above such as $\hat{\sigma}$, $d$, $a$, $\ell$, $maj$, and $coinv$.

We now show the convergence for the sequence $(E_{\mu*0^m})_{m \geq 0}$. The method used shows convergence and gives an explicit combinatorial formula for the limit functions. 

\begin{thm} \label{thm4}

For a composition $\mu$ with $\ell(\mu) = n$ the sequence $(E_{\mu*0^m})_{m \geq 0}$ is convergent with limit $\widetilde{E}_{\mu}$ in $\mathcal{P}_{as}^{+}$ given by 
$$ \widetilde{E}_{\mu} := \sum_{\substack{\lambda ~ \text{partition}  \\ |\lambda| \leq |\mu|}} m_{\lambda}[x_{n+1}+\cdots] \sum_{\substack{\sigma:\mu *0^{\ell(\lambda)} \rightarrow \{1,\ldots,n+\ell(\lambda)\}\\ \text{non-attacking} \\ |\sigma^{-1}(n+i)|= \lambda_i}} x_1 ^{|\sigma^{-1}(1)|}\cdots x_n ^{|\sigma^{-1}(n)|} q^{-maj(\hat{\sigma})}t^{coinv(\hat{\sigma})} \widetilde{\Gamma}(\hat{\sigma})$$  

where 

$$\widetilde{\Gamma}(\hat{\sigma}) = \prod_{\substack{ u \in dg'(\mu* 0^{\ell(\lambda)}) \\ \hat{\sigma}(u) \neq \hat{\sigma}(d(u)) \\ u ~ \text{not in row } 1 }} \left( \frac{1-t}{1-q^{-(\ell(u)+1)}t^{(a(u)+1)}} \right) \prod_{\substack{ u \in dg'(\mu *0^{\ell(\lambda)}) \\ \hat{\sigma}(u) \neq \hat{\sigma}(d(u)) \\ u ~ \text{in row } 1 }} \left( 1-t \right) $$

\end{thm}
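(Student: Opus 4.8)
The plan is to read off both the convergence and the explicit limit directly from the Haglund--Haiman--Loehr formula (Theorem \ref{HHL}) applied to the compositions $\mu * 0^m$. Since $\ell(\mu * 0^m) = n+m$, a non-attacking labelling $\sigma$ of $\mu * 0^m$ takes values in $\{1,\ldots,n+m\}$, and I would split these into \emph{small} labels (those in $\{1,\ldots,n\}$) and \emph{large} labels (those in $\{n+1,\ldots,n+m\}$). The appended zero columns contribute no boxes, so every box of $\mu * 0^m$ already lies in one of the first $n$ columns and the sum is really a sum over labellings of the fixed diagram of $\mu$ with an enlarged label set and an enlarged basement. Because $E_{\mu * 0^m}$ is homogeneous of degree $|\mu * 0^m| = |\mu|$, any term uses at most $|\mu|$ large labels; recording their multiplicities as a partition $\lambda$ yields the outer index with $|\lambda| \le |\mu|$, while the small labels produce the monomial $x_1^{|\sigma^{-1}(1)|}\cdots x_n^{|\sigma^{-1}(n)|}$.

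Next I would track how each ingredient of the HHL weight behaves as $m$ grows. The crucial observation is that appending a zero column increases by one the arm statistic $a(u)$ of every box $u$ in row $1$ (the bottom row, i.e.\ those $u$ whose descent box $d(u)$ lies in the basement), while leaving all leg lengths and all arms of higher boxes unchanged. Consequently, for a row-$1$ box with $\hat\sigma(u) \neq \hat\sigma(d(u))$ the factor $\frac{1-t}{1 - q^{-(\ell(u)+1)} t^{a(u)+1}}$ has $a(u) \to \infty$ and therefore converges $t$-adically to $1-t$, which is exactly the row-$1$ factor appearing in $\widetilde\Gamma$; for boxes in higher rows the factor is eventually constant in $m$. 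I would likewise check that $maj(\hat\sigma)$, $coinv(\hat\sigma)$, and the leg lengths stabilize once $m \ge \ell(\lambda)$, so that the full weight of a surviving term is computed on the fixed auxiliary shape $\mu * 0^{\ell(\lambda)}$ with large labels $n+1,\ldots,n+\ell(\lambda)$ of multiplicities $\lambda_1,\ldots,\lambda_{\ell(\lambda)}$. This identifies the inner sum of the claimed formula.

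The remaining, and I expect hardest, step is to see that the large labels assemble into the monomial symmetric functions $m_\lambda[x_{n+1}+\cdots]$. For a fixed $\lambda$ the large variables enter $E_{\mu * 0^m}$ through $\prod_{k} x_{n+k}^{|\sigma^{-1}(n+k)|}$, and summing over all ways the large values realize the multiplicity profile $\lambda$ must produce a symmetric polynomial in $x_{n+1},\ldots,x_{n+m}$ with leading monomial $x^\lambda$. This is \emph{not} automatic term-by-term, because $maj$ and $coinv$ genuinely depend on the relative order of large labels sharing a column, so the individual weights attached to $x_{n+a}^2 x_{n+b}$ and $x_{n+b}^2 x_{n+a}$ need not agree. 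The point is rather that the \emph{sum} of HHL weights over all non-attacking fillings with a prescribed large-multiplicity partition is invariant under permuting the large values, so the total large-variable generating function is symmetric and equals $m_\lambda[x_{n+1}+\cdots+x_{n+m}]$ times the stabilized inner weight. I would establish this invariance either by a weight-preserving involution on non-attacking fillings that swaps the roles of two large values while adjusting the column configuration, or by relating the symmetrization of $E_{\mu * 0^m}$ over the large variables to a symmetric Macdonald object; either way this symmetry is the technical heart of the argument.

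Finally I would package the output into the convergence framework of Definition \ref{defn5}. Taking the finitely many pairs (partition $\lambda$ with $|\lambda|\le|\mu|$, small monomial $x_1^{a_1}\cdots x_n^{a_n}$) as index set, the functions $g^{(i)}_k$ are the truncations of the almost symmetric functions $x_1^{a_1}\cdots x_n^{a_n}\, m_\lambda[x_{n+1}+\cdots]$, the scalars $a^{(i)}_k$ are the corresponding $q,t$-weights, and the arm-growth computation above shows precisely that these scalars converge $t$-adically, the row-$1$ factors tending to $1-t$ and the higher factors to their stable values $\frac{1-t}{1 - q^{-(\ell(u)+1)} t^{a(u)+1}}$. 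Since $m_\lambda[x_{n+1}+\cdots] \in \Lambda[x_{n+1}+\cdots]$ and only $x_1,\ldots,x_n$ occur non-symmetrically, the limit lies in $\mathscr{P}(n)^{+} \subset \mathscr{P}_{as}^{+}$, which gives both convergence and the asserted closed form for $\widetilde E_\mu$.
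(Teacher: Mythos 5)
Your proposal is correct and follows essentially the same route as the paper: expand $E_{\mu*0^m}$ via the HHL formula, group terms by the multiset of ``large'' labels to extract $m_\lambda[x_{n+1}+\cdots]$, and observe that the row-$1$ arms grow with $m$ so the corresponding factors converge $t$-adically to $1-t$ while all other statistics stabilize. The one point you flag as the technical heart --- symmetry of the large-variable generating function --- does not need a new involution: since the trailing entries of $\mu*0^m$ are all equal, $E_{\mu*0^m}$ is symmetric in $x_{n+1},\ldots,x_{n+m}$ by standard non-symmetric Macdonald polynomial theory, which is exactly the fact the paper's proof sketch invokes before expanding in the $m_\lambda$.
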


\begin{proof}[Proof Sketch]
Start by using the HHL formula to expand $E_{\mu *0^m}$ for $m \geq 1$. Because $E_{\mu *0^m}$ is symmetric in $x_{n+1},\ldots,x_{n+m}$ we can expand relative to the monomial symmetric functions $m_{\lambda}[x_{n+1}+\ldots+x_{n+m}]$. This is made explicit using the combinatorics of non-attacking labellings as per HHL. For sufficiently large $m \geq |\mu|$ the $\mathbb{Q}(q,t)[x_1,\ldots,x_n]$-coefficients of the $m_{\lambda}[x_{n+1}+\ldots+x_{n+m}]$ stabilize to polynomials with coefficients that converge t-adically. 
\end{proof}

\begin{remark}
Note importantly, that for any composition $\mu$ and $m \geq 0$, by definition $\widetilde{E}_{\mu *0^m} = \widetilde{E}_{\mu}$.
\end{remark}

\subsubsection{Example}
Here we list a few simple examples.
\begin{itemize}
    \item $\widetilde{E}_{(1)} = x_1$
    \item $\widetilde{E}_{(2,0)} = x_1^2 + \frac{q^{-1}(1-t)}{1-q^{-1}t}x_1m_1[x_2+x_3+\cdots]$
    \item $\widetilde{E}_{(0,2)} = x_2^2 + (1-t)x_1^2 + \frac{1-q^{-1}t+q^{-1}}{1-q^{-1}t}(1-t)x_1x_2 + \left( \frac{q^{-1}(1-t)}{1-q^{-1}t}x_2 + \frac{q^{-1}(1-t)^2}{1-q^{-1}t}x_1 \right) m_1[x_3+\cdots] $
    \item $\widetilde{E}_{(2,2)} = x_1^2x_2^2 + \frac{q^{-1}(1-t)}{1-q^{-1}t}(x_1^2x_2+x_1x_2^2)m_1[x_3+x_4+\cdots] + \left( \frac{q^{-2}(1-t)^2(1+t)}{q^{-2}t^3-q^{-1}t^2-q^{-1}t+1}\right)x_1x_2m_{1,1}[x_3+x_4+\cdots]$
\end{itemize}

\section{$\mathscr{Y}$  Weight Basis of $\mathscr{P}_{as}^{+}$}

Given a family of commuting operators $\{ y_i : i \in I \}$ and a weight vector v we denote its weight by the function $\alpha: I \rightarrow \mathbb{Q}(q,t)$ such that $y_iv = \alpha(i)v.$ We sometimes denote $\alpha$ as $(\alpha_1,\alpha_2,\ldots).$

\subsection{The $\widetilde{E}_{\mu}$ are $\mathscr{Y}$ weight vectors}

In what follows, the classical spectral theory for non-symmetric Macdonald polynomials is used to demonstrate that the limit functions $\widetilde{E}_{\mu}$ are  $\mathscr{Y}$ weight vectors. The below lemma is a simple application of this classical theory and of basic properties of the t-adic topology on $\mathbb{Q}(q,t)$.

\begin{lem} \label{lem1}
For a composition $\mu$ with $\ell(\mu) = n$ define $\alpha_{\mu}^{(m)}$ to be the weight of $E_{\mu *0^m}$. Then in the $t$-adic topology on $\mathbb{Q}(q,t)$ the sequence $t^{n+m}\alpha_{\mu}^{(m)}(i)$ converges in m to some $\widetilde{\alpha}_{\mu}(i) \in \mathbb{Q}(q,t)$. In particular, $\widetilde{\alpha}_{\mu}(i) = 0$ for $i > n$ and for $1\leq i\leq n$ we have that $\widetilde{\alpha}_{\mu}(i) = 0$  exactly when $\mu_i = 0$.
\end{lem}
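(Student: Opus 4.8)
The plan is to reduce the statement to the classical spectral theory of non-symmetric Macdonald polynomials, where each $E_\nu$ is a simultaneous $Y$-eigenvector with an explicitly known eigenvalue, and then to track how that eigenvalue behaves $t$-adically as one appends trailing zeros. First I would record the classical fact that for a composition $\nu$ of length $N$ the polynomial $E_\nu$ satisfies $Y_i E_\nu = q^{\nu_i} t^{e_\nu(i)} E_\nu$, where $e_\nu(i) \in \mathbb{Z}$ is the explicit rank statistic furnished by the spectral theory (in the conventions of this paper, $e_\nu(i) = -\big(\#\{j<i:\nu_j\le\nu_i\}+\#\{j>i:\nu_j<\nu_i\}\big)$). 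Thus $\alpha_\mu^{(m)}(i)$ is literally a Laurent monomial in $q$ and $t$, and its $q$-exponent $(\mu*0^m)_i$ is constant in $m$ (equal to $\mu_i$ for $i\le n$ and to $0$ for $i>n$). The entire $m$-dependence therefore sits in the single integer $e_{\mu*0^m}(i)$, and the problem becomes a question about the behavior of this exponent together with the prefactor $t^{n+m}$.

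The key combinatorial observation I would isolate is the effect of appending one trailing zero: passing from $\nu$ to $\nu*0$ leaves every entry at positions $\le i$ untouched and adds a single new entry equal to $0$ strictly to the right of position $i$. Comparing the rank statistics gives $e_{\nu*0}(i) = e_\nu(i) - \mathbbm{1}(\nu_i>0)$, since the new trailing zero is counted as a strictly smaller entry to the right exactly when $\nu_i>0$, and is invisible when $\nu_i=0$. Iterating from $\nu=\mu$ yields $e_{\mu*0^m}(i) = e_\mu(i) - m$ whenever $i\le n$ and $\mu_i>0$, and $e_{\mu*0^m}(i)$ eventually constant in $m$ in all remaining cases (both $i\le n$ with $\mu_i=0$, and $i>n$, where the value at position $i$ is $0$, so further appended zeros never increase the count).

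Feeding this into the scaling $t^{n+m}$ finishes the proof. When $i\le n$ and $\mu_i>0$ the exponent of $t$ in $t^{n+m}\alpha_\mu^{(m)}(i)$ equals $(n+m)+e_{\mu*0^m}(i) = n + e_\mu(i)$, independent of $m$, so the sequence is eventually constant and converges $t$-adically to the nonzero monomial $\widetilde\alpha_\mu(i) = q^{\mu_i}t^{\,n+e_\mu(i)}$. In every other case the $t$-exponent is $(n+m)+O(1)\to+\infty$, so $t^{n+m}\alpha_\mu^{(m)}(i)$ tends $t$-adically to $0$; this covers $i>n$ and $i\le n$ with $\mu_i=0$, giving $\widetilde\alpha_\mu(i)=0$ precisely in those cases.

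The only genuine subtlety, and the step I would be most careful about, is pinning down the exact rank statistic $e_\nu(i)$ in the paper's normalization (accounting for the $q\to q^{-1}$ convention and the precise form of the Demazure--Lusztig and $Y_i$ operators), since the whole argument rests on the sign of the exponent and on the $\mathbbm{1}(\nu_i>0)$ increment coming out correctly. Once that monomial formula is fixed, both the $t$-adic convergence and the vanishing pattern are immediate, which is why this is naturally just a short application of the classical theory together with the definition of the $t$-adic topology.
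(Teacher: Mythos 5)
Your proposal is correct and follows essentially the same route as the paper: both reduce to the classical monomial eigenvalue $q^{\mu_i}t^{1-\beta_\mu(i)}$ (your $e_\nu(i)$ equals the paper's $1-\beta_\nu(i)$), track how the $t$-exponent shifts by $-\mathbbm{1}(\mu_i>0)$ per appended zero, and then observe that the prefactor $t^{n+m}$ makes the sequence eventually constant when $\mu_i>0$ and $i\le n$, and sends it $t$-adically to $0$ otherwise. The paper simply writes the three cases out in a single displayed formula rather than deriving the one-step increment rule, but the content is identical.
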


\begin{proof}[Proof:]
Take $\mu = (\mu_1,\ldots,\mu_n)$. From classic double affine Hecke algebra theory we have $\alpha_{\mu}^{(0)}(i) = q^{\mu_i}t^{1-\beta_{\mu}(i)}$ where 
$$\beta_{\mu}(i) := \#\{j: 1\leq j \leq i ~, \mu_j \leq \mu_i\} + \#\{j: i < j \leq n ~, \mu_i > \mu_j\}.$$
It follows then that

\[ t^{n+m}\alpha_{\mu}^{(m)}(i) = 
    \begin{cases}
    q^{\mu_i}t^{n+m+1-(\beta_{\mu}(i) + m \mathbbm{1}(\mu_i \neq 0))} = t^n\alpha_{\mu}^{(0)}(i) & i\leq n, \mu_i \neq 0\\
    q^{\mu_i}t^{n+m+1-(\beta_{\mu}(i) + m \mathbbm{1}(\mu_i \neq 0))} = t^{n+m}\alpha_{\mu}^{(0)}(i) & i\leq n, \mu_i = 0 \\
    t^{n+m+1-(\#(\mu_{j} = 0) + i-n)} = t^{\#(\mu_j \neq 0)}t^{m+1-(i-n)} & i > n
     \end{cases}
\]

Lastly, by limiting $m\rightarrow \infty$ we get the result.
\end{proof}

For a composition $\mu$ define the list of scalars $\widetilde{\alpha}_{\mu}$ using the formula in Lemma \ref{lem1} for $\widetilde{\alpha}_{\mu}(i)$ for $i \in \mathbb{N}$. We use Lemma \ref{lem1} to show that certain denominators that occur in the proof of Lemma \ref{lem2} below do not vanish in the limit as $m \rightarrow \infty $.

\begin{lem} \label{lem2}
For $\mu = (\mu_1,\ldots,\mu_n)$ with $\mu_i \neq 0$ for $1\leq i \leq n$, $\widetilde{E}_{\mu}$ is a $\y$-weight vector with weight $\widetilde{\alpha}_{\mu}$. 
\end{lem}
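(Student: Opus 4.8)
The plan is to show that $\mathscr{Y}_i \widetilde{E}_\mu = \widetilde{\alpha}_\mu(i) \widetilde{E}_\mu$ for each $i$ by transporting the exact eigenvalue equations $Y_i^{(n+m)} E_{\mu * 0^m} = \alpha_\mu^{(m)}(i) E_{\mu * 0^m}$ through the limit defining $\mathscr{Y}_i$. Since the hypothesis is that all parts $\mu_i$ are nonzero, Lemma \ref{lem1} guarantees the rescaled eigenvalues $t^{n+m}\alpha_\mu^{(m)}(i)$ stabilize (they equal the constant $t^n \alpha_\mu^{(0)}(i)$ for $i \le n$ and tend to $0$ for $i > n$), so the target weight $\widetilde{\alpha}_\mu$ is already defined. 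The first step is to recall that the limit operator is built from $\widetilde{Y}_1^{(n)} = t^n \rho \circ Y_1^{(n)}$, and to produce the higher $\mathscr{Y}_i$ via the intertwining $T$-relations (iii) of Definition \ref{defn1}, exactly as the $Y_i$ arise from $Y_1$; so it suffices to establish the claim for $\mathscr{Y}_1$ and then propagate it using the $\mathscr{H}^+$ relations, which the $\widetilde{E}_\mu$ respect in the limit.

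The core computation is the $i=1$ case. I would apply $\widetilde{Y}_1^{(n+m)} = t^{n+m}\rho \circ Y_1^{(n+m)}$ to $\pi_{n+m}(\widetilde{E}_\mu) = E_{\mu * 0^m}$. Using $Y_1^{(n+m)} E_{\mu*0^m} = \alpha_\mu^{(m)}(1) E_{\mu*0^m}$, this gives $t^{n+m} \alpha_\mu^{(m)}(1)\, \rho(E_{\mu*0^m})$. The key point is that when $\mu_1 \neq 0$, every monomial appearing in $E_{\mu*0^m}$ carries a positive power of $x_1$ (by triangularity $E_{\mu*0^m} = x^{\mu*0^m} + \sum_{\lambda < \mu*0^m} a_\lambda x^\lambda$, and Bruhat-smaller compositions cannot increase the first coordinate past the relevant bound when $\mu_1$ leads), so $\rho$ acts as the identity on $E_{\mu*0^m}$. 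Hence $\widetilde{Y}_1^{(n+m)}\pi_{n+m}(\widetilde{E}_\mu) = (t^{n+m}\alpha_\mu^{(m)}(1)) E_{\mu*0^m}$, and taking the limit in $m$ yields $\mathscr{Y}_1 \widetilde{E}_\mu = \widetilde{\alpha}_\mu(1)\widetilde{E}_\mu$, where the scalar converges by Lemma \ref{lem1} and $E_{\mu*0^m} \to \widetilde{E}_\mu$ by Theorem \ref{thm4}.

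For the remaining $\mathscr{Y}_i$, I would argue inductively using the Hecke relation $T_i Y_i T_i = t Y_{i+1}$, which holds for the limit operators since the $T_i$ act compatibly with the projections $\pi_n$ on almost symmetric functions. Expressing $\mathscr{Y}_{i+1}$ through $\mathscr{Y}_i$ and $T_i$, the eigenvalue $\widetilde{\alpha}_\mu(i+1)$ is obtained from $\widetilde{\alpha}_\mu(i)$ by the usual Cherednik recursion, and one checks this matches the formula in Lemma \ref{lem1}; the base case $i=1$ is the computation above. One subtlety to verify is that the convergence in Definition \ref{defn5} is uniform enough that $\lim_m (c_m f_m) = (\lim_m c_m)(\lim_m f_m)$ for the scalar sequence $c_m = t^{n+m}\alpha_\mu^{(m)}(1)$ and the convergent sequence $f_m = E_{\mu*0^m}$, but this follows directly from the t-adic convergence of $c_m$ together with the definition of convergence, which is designed precisely to be compatible with multiplication by t-adically convergent scalars.

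The main obstacle I anticipate is not the eigenvalue bookkeeping but verifying cleanly that $\rho$ acts as the identity on each $E_{\mu*0^m}$ when all parts of $\mu$ are nonzero; this requires confirming that the Bruhat order on $\mathbb{Z}^{n+m}$ does not allow a monomial with $x_1$-exponent zero to appear in $E_{\mu*0^m}$ when $\mu_1 > 0$, which is where the restriction $\mu_i \neq 0$ is genuinely used and is precisely why the general case (treated separately via the lowering operators $d_-$) needs additional work.
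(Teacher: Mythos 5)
Your overall strategy for $\mathscr{Y}_1$ is essentially the paper's, but the two places where you wave your hands are exactly where the real content of the lemma lives, and neither is filled in. First, the claim that $\rho$ fixes $E_{\mu*0^m}$ "by triangularity" does not hold up: the Bruhat order on $\mathbb{Z}^{n+m}$ certainly admits compositions $\lambda < \mu*0^m$ with $\lambda_1=0$ (rearrangements of dominance-smaller shapes can have first entry zero), so triangularity alone cannot exclude monomials free of $x_1$. The divisibility $x_1\cdots x_n \mid E_{\mu*0^m}$ is true, but the paper obtains it from the HHL formula (Theorem \ref{HHL}): every non-attacking labelling of $\mu*0^m$ has row-one labels exactly $\{1,\ldots,n\}$ when all parts of $\mu$ are nonzero. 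You need that combinatorial input, not the triangularity axiom.

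Second, and more seriously, the inductive propagation from $\mathscr{Y}_i$ to $\mathscr{Y}_{i+1}$ via $T_i\mathscr{Y}_iT_i = t\mathscr{Y}_{i+1}$ does not work. Knowing $\mathscr{Y}_i\widetilde{E}_\mu = \widetilde{\alpha}_\mu(i)\widetilde{E}_\mu$ tells you nothing about $\mathscr{Y}_{i+1}\widetilde{E}_\mu = t^{-1}T_i\mathscr{Y}_i(T_i\widetilde{E}_\mu)$, because $T_i\widetilde{E}_\mu$ is not proportional to $\widetilde{E}_\mu$; the "Cherednik recursion" you invoke is the intertwiner relation, which transports eigenvalues between the \emph{distinct} eigenvectors $E_\mu$ and $E_{s_i\mu}$, not between successive $\mathscr{Y}_i$'s acting on the same vector. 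The paper instead starts from the explicit formula
\begin{equation*}
\mathscr{Y}_r(\widetilde{E}_\mu) \;=\; \widetilde{\alpha}_\mu(r)\,\bigl(T_{r-1}\cdots T_1\,\rho\, T_1^{-1}\cdots T_{r-1}^{-1}\bigr)\widetilde{E}_\mu
\end{equation*}
(from the limit definition together with Prop.~6.21 of \cite{Ion_2022}), which reduces the lemma to showing $x_1 \mid T_1^{-1}\cdots T_{r-1}^{-1}\widetilde{E}_\mu$ --- a genuinely different and harder statement than $x_1\mid \widetilde{E}_\mu$ once $r\geq 2$. That divisibility is proved at each finite level by solving the relation $Y_r^{(n+m)} = t^{-(r-1)}T_{r-1}\cdots T_1\,\omega_{n+m}^{-1}T_{n+m-1}^{-1}\cdots T_r^{-1}$ for $T_1^{-1}\cdots T_{r-1}^{-1}E_{\mu*0^m}$, using $x_r\mid E_{\mu*0^m}$, and commuting $X_r$ past $\omega_{n+m}^{-1}T_{n+m-1}^{-1}\cdots T_r^{-1}$ to produce a factor of $X_1$. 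Your proposal contains no substitute for this step, so as written it only establishes the $r=1$ case (and even that with an incorrect justification for the action of $\rho$).
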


\begin{proof}
We spare the reader the direct calculation which uses the limit definition of the $\y_r$ operators and Prop. 6.21 from \cite{Ion_2022} which leads to

\begin{equation}\label{Y_r explicit}
    \y_r( \widetilde{E}_{\mu}) = \widetilde{\alpha}_{\mu}(r) (T_{r-1}\cdots T_1 \rho T_1^{-1}\cdots T_{r-1}^{-1}) \widetilde{E}_{\mu} .
\end{equation}
 
We will show that the right side of \eqref{Y_r explicit} is $\widetilde{\alpha}_{\mu}(r)\widetilde{E}_{\mu}$.
As $\widetilde{\alpha}_{\mu}(r) = 0$ for $r > n$ by Lemma \ref{lem1}, the lemma holds for $r \leq n$. Now let us consider some fixed $r\leq n$.
Below we show that $x_1 | T_1^{-1}\cdots T_{r-1}^{-1}\widetilde{E}_{\mu}$ from which it follows that $$\rho(T_1^{-1}\cdots T_{r-1}^{-1}\widetilde{E}_{\mu}) = T_1^{-1}\cdots T_{r-1}^{-1}\widetilde{E}_{\mu} $$
implying
\begin{align*}
\y_r(\widetilde{E}_{\mu}) &= \widetilde{\alpha}_{\mu}(r) (T_{r-1}\cdots T_1 \rho T_1^{-1}\cdots T_{r-1}^{-1}) \widetilde{E}_{\mu} \\
&= \widetilde{\alpha}_{\mu}(r) (T_{r-1}\cdots T_1 T_1^{-1}\cdots T_{r-1}^{-1}) \widetilde{E}_{\mu} \\
&= \widetilde{\alpha}_{\mu}(r)\widetilde{E}_{\mu}
\end{align*}
as desired.
To show that $x_1 | T_1^{-1}\cdots T_{r-1}^{-1}\widetilde{E}_{\mu}$ it suffices to show that for all $m \geq 0$,
$x_1 | T_1^{-1}\cdots T_{r-1}^{-1} E_{\mu *0^m}$. To this end fix $m \geq 0$. We have that 

\begin{align*}
\alpha_{\mu}^{(m)}(r)E_{\mu*0^m}&= Y_r^{(n+m)}(E_{\mu*0^m}) \\
&= t^{-(r-1)}T_{r-1}\cdots T_1 \omega_{n+m}^{-1} T_{n+m-1}^{-1}\cdots T_r^{-1} E_{\mu*0^m}.\\
\end{align*}

Since $\alpha_{\mu}^{(m)}(r) \neq 0$ we can have $\frac{1}{\alpha_{\mu}^{(m)}(r)} T_1 ^{-1}\cdots T_{r-1}^{-1}$ act on both sides to get 

\begin{align*}
T_1^{-1}\cdots T_{r-1}^{-1} E_{\mu*0^m} &= \frac{t^{-(r-1)}}{\alpha_{\mu}^{(m)}(r)}\omega_{n+m}^{-1} T_{n+m-1}^{-1}\cdots T_r^{-1} E_{\mu*0^m}.
\end{align*}
 By HHL any non-attacking labelling of $\mu* 0^m$ will have row 1 diagram labels given by $\{1,2,\ldots,n\}$ so $x_1\cdots x_n$ divides $E_{\mu*0^m}$ so in particular $x_r$ divides $E_{\mu*0^m}$ for all $m \geq 0$. Lastly, 
 \begin{align*}
\omega_{n+m}^{-1} T_{n+m-1}^{-1}\cdots T_r^{-1}X_r &= \omega_{n+m}^{-1} t^{-(n+m-r)}X_{n+m}T_{n+m-1}\cdots T_r \\
&= qt^{-(n+m-r)} X_1 \omega_{n+m}^{-1} T_{n+m-1}\cdots T_r\\
 \end{align*}
Thus $x_1$ divides $T_1^{-1}\cdots T_{r-1}^{-1} E_{\mu*0^m}$ for all $m \geq 0$ showing the result.
\end{proof}

Now we consider the general situation where the composition $\mu$ can have some parts which are 0. We can extend the above result, Lemma \ref{lem2}, by a straight-forward argument using intertwiner theory from the study of affine Hecke algebras.

\begin{thm} \label{thm5}
For all compositions $\mu$, $\widetilde{E}_{\mu}$ is a $\y$-weight vector with weight $\widetilde{\alpha}_{\mu}.$
\end{thm}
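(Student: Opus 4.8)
The plan is to reduce the general case to the all-nonzero case already handled in Lemma \ref{lem2} by using intertwiner operators to move between the weight vector $\widetilde{E}_{\mu}$ and a weight vector whose composition has all parts positive. The central mechanism is the following. In the finite DAHA $\mathscr{H}_{n}^{+}$, the Demazure--Lusztig operators combine with the Cherednik operators to build \emph{intertwiners} $\phi_i = T_i Y_i - Y_i T_i$ (or their normalized versions) which permute the entries of the weight: if $E_\nu$ has $Y$-weight with $\nu_i < \nu_{i+1}$, then applying the appropriate intertwiner $\phi_i$ produces (a scalar multiple of) $E_{s_i \nu}$. Since $\widetilde{E}_\mu$ arises as a stable limit of the $E_{\mu * 0^m}$, and the operators $T_i$ survive to the limit as genuine operators on $\mathscr{P}_{as}^{+}$, I expect the intertwiner relations to pass to the limit and act on the $\widetilde{E}_\mu$ as well, relating $\widetilde{E}_\mu$ to $\widetilde{E}_{s_i \mu}$ whenever $s_i$ moves a zero past a nonzero part.

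First I would set up the limit intertwiners. I would take the intertwiner $\varphi_i := T_i + \tfrac{1-t}{\,\mathscr{Y}_{i+1}/\mathscr{Y}_i - 1\,}$ (or the equivalent combination expressed through $\mathscr{Y}_i$ and $T_i$ available once Lemma \ref{lem1} guarantees the relevant eigenvalues $\widetilde{\alpha}_\mu(i)$ and $\widetilde{\alpha}_\mu(i{+}1)$ are distinct and the denominators do not vanish), and verify the commutation relation $\varphi_i \mathscr{Y}_j = \mathscr{Y}_{s_i(j)} \varphi_i$ as an identity of operators on $\mathscr{P}_{as}^{+}$. This relation is the image of the corresponding finite-$n$ identity under the limit, so the key point is that both $T_i$ and $\mathscr{Y}_j$ are already established as well-defined limit operators; the relation then holds because it holds before taking limits and both sides are continuous in the relevant sense. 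Next I would show that $\varphi_i$ carries the $\mathscr{Y}$-weight vector $\widetilde{E}_\mu$ to a $\mathscr{Y}$-weight vector with weight $s_i \cdot \widetilde{\alpha}_\mu$, and that this image is a nonzero scalar multiple of $\widetilde{E}_{s_i \mu}$, by matching leading terms via the triangularity in Definition \ref{defn3} together with the explicit limit formula in Theorem \ref{thm4}.

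Second, I would run the induction. Given an arbitrary composition $\mu$ with some zero parts, choose a sequence of adjacent transpositions $s_{i_1}, \ldots, s_{i_k}$ that sorts all the nonzero parts of $\mu$ to the front, producing a composition $\mu'$ of the form $\nu * 0^{\ell}$ with $\nu$ having all parts nonzero. By the Remark following Theorem \ref{thm4}, $\widetilde{E}_{\nu * 0^\ell} = \widetilde{E}_\nu$, and $\widetilde{E}_\nu$ is a $\mathscr{Y}$-weight vector by Lemma \ref{lem2}. Applying the composite intertwiner $\varphi_{i_1} \cdots \varphi_{i_k}$ and using the commutation relation repeatedly, I transport this weight vector back to a nonzero multiple of $\widetilde{E}_\mu$, and the weight transforms accordingly to $\widetilde{\alpha}_\mu$, matching the formula in Lemma \ref{lem1} under the corresponding permutation of indices. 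This proves $\widetilde{E}_\mu$ is a $\mathscr{Y}$-weight vector with the claimed weight.

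The main obstacle I anticipate is justifying that the intertwiner identities genuinely survive the stable limit and that the intertwiner does not annihilate $\widetilde{E}_\mu$. In the finite setting the normalization factor in $\varphi_i$ involves $\mathscr{Y}_{i+1}/\mathscr{Y}_i - 1$, and when a zero part is being moved the corresponding limit eigenvalue $\widetilde{\alpha}_\mu(i)$ can itself be $0$ (by Lemma \ref{lem1}), so the naive rational expression in the $\mathscr{Y}_i$ is not obviously well-defined on the nose. I would handle this by working with the \emph{un-normalized} intertwiner $T_i - \tfrac{1-t}{\cdots}$ at finite level, tracking the precise scalar that appears when passing $E_{\mu * 0^m}$ through $T_i$, and showing that this scalar converges $t$-adically to a finite nonzero limit exactly as in the proof of Lemma \ref{lem1}; the nonvanishing of the relevant denominators in the limit is precisely what Lemma \ref{lem1} was set up to guarantee. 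This reduces the whole argument to a convergence bookkeeping that parallels the one already carried out for the eigenvalues.
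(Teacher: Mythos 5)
Your proposal matches the paper's proof in all essentials: both reduce to the all-nonzero case of Lemma \ref{lem2} via the identification $\widetilde{E}_{\nu*0^{\ell}}=\widetilde{E}_{\nu}$, both induct on Bruhat order using the un-normalized intertwiners $\varphi_i = T_i\mathscr{Y}_i-\mathscr{Y}_iT_i$ precisely to sidestep the vanishing-eigenvalue issue you flag, and both justify the key identity $\varphi_i\widetilde{E}_{\mu}=(\widetilde{\alpha}_{\mu}(i)-\widetilde{\alpha}_{\mu}(i+1))\widetilde{E}_{s_i(\mu)}$ by passing the finite-level intertwiner relation on $E_{\mu*0^m}$ through the stable limit. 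The approach is correct and essentially identical to the paper's.
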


\begin{proof}[Proof Sketch:]
 Lemma \ref{lem2} shows that this statement holds for any composition with all parts nonzero. Further, every composition $\mu$ can be written as a permutation of a composition of the form $\nu * 0^m$ for a partition $\nu$ and some $m \geq 0$. Hence, it suffices to show that for any composition $\mu$, if $\widetilde{E}_{\mu}$ satisfies the theorem then so will $\widetilde{E}_{s_i(\mu)}$. This process is made rigorous by using induction on Bruhat order. Using the intertwiner operators from standard affine Hecke algebra theory, given by $\varphi_i = T_i\y_i-\y_iT_i$, we only need to show that for any $\mu$ with $s_i(\mu) > \mu$ in Bruhat order, $$\varphi_i \widetilde{E}_{\mu} = (\widetilde{\alpha}_{\mu}(i) - \widetilde{\alpha}_{\mu}(i+1))\widetilde{E}_{s_i(\mu)}.$$ Suppose the theorem holds for some $\mu$ with $\ell(\mu) = n$ and let $1 \leq i \leq n$ such that $s_i(\mu) > \mu$. Then we have the following:
\begin{align*}
    \varphi_{i}\widetilde{E}_{\mu} &= (T_i(\y_i - \y_{i+1}) + (1-t)\y_{i+1})\widetilde{E}_{\mu} \\
    &= (\widetilde{\alpha}_{\mu}(i)- \widetilde{\alpha}_{\mu}(i+1))T_i\widetilde{E}_{\mu} + (1-t)\widetilde{\alpha}_{\mu}(i+1)\widetilde{E}_{\mu} \\
    &= \lim_{m} (t^{n+m}\alpha_{\mu}^{(m)}(i)- t^{n+m}\alpha_{\mu}^{(m)}(i+1))T_iE_{\mu *0^m} + (1-t)t^{n+m}\alpha_{\mu}^{(m)}(i+1)E_{\mu *0^m} \\
    &= \lim_{m}(t^{n+m}\alpha_{\mu}^{(m)}(i)- t^{n+m}\alpha_{\mu}^{(m)}(i+1))E_{s_i(\mu) *0^m}\\
    &= (\widetilde{\alpha}_{\mu}(i) - \widetilde{\alpha}_{\mu}(i+1))\widetilde{E}_{s_i(\mu)}.\\
\end{align*}

\end{proof}

We have shown in Theorem \ref{thm5} there is an explicit collection of $\mathscr{Y}$-weight vectors $\widetilde{E}_{\mu}$ in $\mathscr{P}_{as}^{+}$ arising as the limits of non-symmetric Macdonald polynomials $E_{\mu * 0^m}$. Unfortunately, these $\widetilde{E}_{\mu}$ do not span $\mathscr{P}_{as}^{+}$. To see this note that one cannot write a non-constant symmetric function as a linear combination of the $\widetilde{E}_{\mu}$. However, in the below work we build a full $\y$ weight basis. 

\subsection{Constructing the Weight Basis}

 To complete our construction of a full weight basis of $\mathscr{P}_{as}^{+}$ one needs the $\partial_{-}^{(k)}$ operators from Ion and Wu. These operators are, up to a change of variables and plethsym, the $d_{-}$ operators from Carlson and Mellit's standard $\mathbb{A}_{q,t}$ representation. 

\begin{defn}\cite{Ion_2022} \label{defn6}
 Define the operator $\partial_{-}^{(k)}: \mathscr{P}(k)^{+} \rightarrow \mathscr{P}(k-1)^{+}$ to be the $\mathscr{P}_{k-1}^{+}$-linear map which acts on elements of the form $x_k^nF[x_{k+1}+x_{k+2}\cdots]$ for $F \in \Lambda$ and $n \geq 0$ as 
 $$\partial_{-}^{(k)}(x_k^nF[x_{k+1}+x_{k+2}+\cdots]) = \mathscr{B}_n(F)[x_k + x_{k+1} + \cdots]. $$
 Here the $\mathscr{B}_n$ are the Jing operators which serve as creation operators for the Hall-Littlewood symmetric functions $\mathcal{P}_{\lambda}$ given explicitly by the following plethystic formula:
 $$\mathscr{B}_n(F)[X] = \langle z^n \rangle  F[X-z^{-1}]Exp[(1-t)zX] .$$
 
\end{defn}
We refer the reader to \cite{Ion_2022} for a discussion on the Jing operators.
 Importantly, the $\partial_{-}^{(k)}$ operators do not come from the $\mathscr{H}^{+}$ action itself. Note that the $\partial_{-}^{(k)}$ operators are homogeneous by construction. 

We require the following lemma.

\begin{lem}\cite{Ion_2022} \label{lem3}
The map $\partial_{-}^{(n)}: \mathcal{P}(n)^{+} \rightarrow \mathcal{P}(n-1)^{+}$ is a projection onto $\mathcal{P}(n-1)^{+}$ i.e. for $f \in \mathcal{P}(n-1)^{+}\subset \mathcal{P}(n)^{+}$ we have that $\partial_{-}^{(n)}(f) = f$.
\end{lem}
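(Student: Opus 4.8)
The plan is to strip away the module structure until the statement becomes a single clean identity among symmetric functions, and then to prove that identity with the vertex-operator generating function governing the Jing operators $\mathscr{B}_m$.

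First I would use the two structural features of $\partial_{-}^{(n)}$: it is $\mathscr{P}_{n-1}^{+}$-linear and, being assembled from the homogeneous operators $\mathscr{B}_m$, it is homogeneous. A general element of $\mathscr{P}(n-1)^{+}$ is a $\mathbb{Q}(q,t)[x_1,\ldots,x_{n-1}]$-linear combination of terms $G[x_n+x_{n+1}+\cdots]$ with $G\in\Lambda$ homogeneous, so by linearity it suffices to prove $\partial_{-}^{(n)}(G[x_n+x_{n+1}+\cdots])=G[x_n+x_{n+1}+\cdots]$. The one genuine input here is to rewrite $G[x_n+x_{n+1}+\cdots]$ in the defining presentation $\mathscr{P}(n)^{+}=\mathbb{Q}(q,t)[x_1,\ldots,x_n]\otimes\Lambda[x_{n+1}+\cdots]$. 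Setting $X'=x_{n+1}+x_{n+2}+\cdots$ and invoking the single-variable plethystic addition formula $G[x_n+X']=\sum_{m\ge 0}x_n^{m}(h_m^{\perp}G)[X']$, where $h_m^{\perp}$ is the adjoint of multiplication by $h_m$ under the Hall pairing, puts the element into exactly the form to which the definition of $\partial_{-}^{(n)}$ applies; homogeneity of $G$ makes this a finite sum.

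Applying the definition termwise then gives $\partial_{-}^{(n)}(G[x_n+X'])=\sum_{m\ge 0}\mathscr{B}_m(h_m^{\perp}G)[x_n+x_{n+1}+\cdots]$, so the entire lemma collapses to the symmetric-function identity $\sum_{m\ge 0}\mathscr{B}_m(h_m^{\perp}G)=G$ for every $G\in\Lambda$. To prove this I would package the operators into generating functions: write $B(z)F=F[X-z^{-1}]\,\mathrm{Exp}[(1-t)zX]$ so that $\mathscr{B}_m F=\langle z^{m}\rangle B(z)F$, and recall $\sum_{m\ge 0}u^{m}(h_m^{\perp}G)=G[X+u]$. The index $m$ appearing simultaneously in $\langle z^{m}\rangle$ and in $h_m^{\perp}$ is untangled by a diagonal coefficient extraction: introducing the auxiliary variable $u$ rewrites $\sum_m\mathscr{B}_m(h_m^{\perp}G)$ as the $z^{0}$-coefficient of $G[X-z^{-1}+u]\,\mathrm{Exp}[(1-t)zX]$ evaluated after the identification $u=z^{-1}$. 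Under $u=z^{-1}$ the added and subtracted single variables cancel, $G[X-z^{-1}+z^{-1}]=G[X]$, and since $\langle z^{0}\rangle\mathrm{Exp}[(1-t)zX]=h_0[(1-t)X]=1$ we are left with exactly $G[X]$. Transporting this back through the reduction yields $\partial_{-}^{(n)}(f)=f$.

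The step I expect to be the main obstacle is the symmetric-function identity, and within it the task of making the diagonal extraction rigorous rather than merely formal. One must check that for homogeneous $G$ every series in play is finite, so that the substitution $u=z^{-1}$ is legitimate and no spurious terms reach the $z^{0}$-coefficient, and one must carefully verify the two single-variable plethystic expansions $G[X+u]=\sum_{m}u^{m}h_m^{\perp}G$ and $F[X-z^{-1}]=\sum_{\ell}(-z^{-1})^{\ell}e_{\ell}^{\perp}F$ that drive the computation. Everything else, namely the $\mathscr{P}_{n-1}^{+}$-linear reduction and the passage from the abstract identity back to the inclusion $\mathscr{P}(n-1)^{+}\subset\mathscr{P}(n)^{+}$, is routine bookkeeping; the smallest nontrivial check $G=e_1$, where $\mathscr{B}_1(1)+\mathscr{B}_0(e_1)=(1-t)e_1+t\,e_1=e_1$, already confirms the mechanism.
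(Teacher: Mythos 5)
Your argument is correct. Note first that the paper does not prove this lemma at all: it is imported verbatim from Ion and Wu, so there is no in-text proof to compare against, and what you have written is a self-contained derivation of the cited fact. Your reduction is the right one: $\mathscr{P}_{n-1}^{+}$-linearity cuts the problem down to elements $G[x_n+x_{n+1}+\cdots]$, the expansion $G[x_n+X']=\sum_{m}x_n^{m}(h_m^{\perp}G)[X']$ puts such an element into the form on which $\partial_{-}^{(n)}$ is defined, and the lemma becomes the identity $\sum_{m}\mathscr{B}_m(h_m^{\perp}G)=G$. Your generating-function proof of that identity is sound: since $G$ is homogeneous of degree $d$, only $m\leq d$ contribute, each $(h_m^{\perp}G)[X-z^{-1}]$ is a Laurent polynomial in $z$ with powers bounded below by $-d$, and the substitution $u=z^{-1}$ collapses $\sum_{m}z^{-m}(h_m^{\perp}G)[X-z^{-1}]$ to $G[X-z^{-1}+z^{-1}]=G[X]$ because plethysm is additive on alphabets at the level of power sums; extracting $\langle z^{0}\rangle$ then leaves $G[X]\cdot h_0[(1-t)X]=G[X]$. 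The sanity check $\mathscr{B}_0(e_1)+\mathscr{B}_1(1)=te_1+(1-t)e_1=e_1$ is also computed correctly. The one presentational caveat is that you should state explicitly that the finiteness of all the series (guaranteed by homogeneity of $G$) is what licenses interchanging the sum over $m$ with the coefficient extraction in $z$; you flag this yourself as the delicate point, and with that remark made precise the proof is complete.
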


Lemma \ref{lem3} shows that the following operator is well defined.

\begin{defn} \label{defn7}

For $f\in \mathscr{P}(n)^{+} \subset \mathscr{P}_{as}^{+}$  define $\widetilde{\sigma}(f) := \partial_{-}^{(1)}\cdots \partial_{-}^{(n)} f $. Then  $\widetilde{\sigma}$ defines an operator $\mathscr{P}_{as}^{+} \rightarrow \Lambda$ which we call the \textbf{\textit{stable-limit symmetrization operator}}. 
For a partition $\lambda$ define $\mathcal{A}_{\lambda} = \widetilde{\sigma}( \widetilde{E}_{\lambda}) \in \Lambda$.
\end{defn}

The $\mathcal{A}_{\lambda}$ symmetric functions have many useful properties including, but not limited to, the following.

\begin{thm} \label{thm6}
The set $\{ \mathcal{A}_{\lambda} : \lambda ~ \text{is a partition} \}$ is a basis of $\Lambda$.
\end{thm}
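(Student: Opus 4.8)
The plan is to prove that $\{\mathcal{A}_\lambda\}$ forms a basis of $\Lambda$ by establishing an appropriate triangularity with respect to a known basis, most naturally the monomial symmetric functions $m_\lambda$ or the Hall--Littlewood functions $\mathcal{P}_\lambda$. Since each $\mathcal{A}_\lambda = \widetilde{\sigma}(\widetilde{E}_\lambda)$ is homogeneous of degree $|\lambda|$ (because both the $E$'s and the $\partial_-^{(k)}$ operators are homogeneous), it suffices to work degree by degree and show that within each fixed degree $d$ the finitely many $\mathcal{A}_\lambda$ with $|\lambda|=d$ are linearly independent and span the degree-$d$ part of $\Lambda$. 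Because the number of partitions of $d$ equals $\dim \Lambda_d$, linear independence and spanning are equivalent, so I only need one of them; triangularity gives both at once.

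First I would compute the leading term of $\mathcal{A}_\lambda$ explicitly. Using the combinatorial formula for $\widetilde{E}_\lambda$ from Theorem \ref{thm4}, the $x^\lambda = x_1^{\lambda_1}\cdots x_n^{\lambda_n}$ monomial appears with coefficient $1$ (it is the unique leading term coming from $\sigma$ the identity-type labelling, the purely non-symmetric part where no $m_\lambda[x_{n+1}+\cdots]$ factor is present). Then I would track what $\widetilde{\sigma} = \partial_-^{(1)}\cdots\partial_-^{(n)}$ does to this leading monomial: applying the Jing creation operators $\mathscr{B}_n$ successively to $x_n^{\lambda_n}x_{n-1}^{\lambda_{n-1}}\cdots$ builds up a Hall--Littlewood symmetric function. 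Since the $\mathscr{B}_n$ are precisely the Hall--Littlewood creation operators, $\partial_-^{(1)}\cdots\partial_-^{(n)}(x^\lambda) = \mathscr{B}_{\lambda_1}\cdots\mathscr{B}_{\lambda_n}(1)[X]$, which is (up to a known scalar) the Hall--Littlewood function $\mathcal{P}_{\text{sort}(\lambda)}$ or $Q'_{\lambda}$ depending on conventions. For a partition $\lambda$ this is already sorted, so the dominant contribution is $\mathcal{P}_\lambda$ plus lower terms in dominance order.

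The key step is then to argue that the remaining terms of $\widetilde{E}_\lambda$ — those involving $m_\nu[x_{n+1}+\cdots]$ with $|\nu|>0$ and a smaller non-symmetric part — contribute, after applying $\widetilde{\sigma}$, only to Hall--Littlewood functions $\mathcal{P}_\kappa$ with $\kappa$ strictly below $\lambda$ in dominance order (or strictly smaller in size times appropriate grading, but homogeneity forces equal total degree, so genuinely lower in dominance). This reduces to understanding the interaction of $\partial_-^{(k)}$ with the plethystic factors: each application of $\partial_-^{(n)}$ to $x_n^a F[x_{n+1}+\cdots]$ returns $\mathscr{B}_a(F)[x_n+x_{n+1}+\cdots]$, and the degree bookkeeping together with the creation-operator structure should show the resulting Hall--Littlewood expansion is dominance-triangular with $\mathcal{P}_\lambda$ as the top term and unit leading coefficient. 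I expect this dominance-triangularity argument to be the main obstacle, since it requires carefully controlling how the $\mathscr{B}_a$ operators reorder and combine the parts contributed by the symmetric tails; the cleanest route is likely to invoke the known triangularity of iterated Jing operators $\mathscr{B}_{\mu_1}\cdots\mathscr{B}_{\mu_k}(1) = \mathcal{P}_{\text{sort}(\mu)} + (\text{lower in dominance})$ and check that all correction terms in $\widetilde{E}_\lambda$ only push the index strictly downward. Once this triangular change of basis from $\{\mathcal{A}_\lambda\}$ to $\{\mathcal{P}_\lambda\}$ (equivalently to $\{m_\lambda\}$) is established with invertible diagonal entries, invertibility of the transition matrix in each degree is immediate and the theorem follows.
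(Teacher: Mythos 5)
Your proposal is correct and follows essentially the same route as the paper, whose proof sketch likewise reduces the statement to showing that each $\mathcal{A}_{\lambda}$ has a unitriangular expansion, with respect to dominance order, in the Hall--Littlewood basis. Your additional detail about tracking the leading monomial $x^{\lambda}$ through the iterated Jing operators $\mathscr{B}_{\lambda_1}\cdots\mathscr{B}_{\lambda_n}(1)$ and controlling the correction terms coming from the symmetric tails is precisely the content one would need to fill in the paper's sketch.
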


\begin{proof}[Proof Sketch]
The result follows after proving the stronger property that each $A_{\lambda}$ has a unitriangular expansion with respect to dominance order into the Hall-Littlewood symmetric function basis.
\end{proof}

Stable-limit symmetrization behaves well with respect to permuting the defining composition $\mu$ of each $\widetilde{E}_{\mu}$.

\begin{lem} \label{lem4}
For any composition $\mu$ there is some nonzero scalar $\gamma_{\mu} \in \mathbb{Q}(q,t)$ such that 
$$ \widetilde{\sigma}( \widetilde{E}_{\mu} ) = \gamma_{\mu} \mathcal{A}_{\text{sort}(\mu)} $$
where $\gamma_{\mu} = 1$ when $\mu$ is a partition.
\end{lem}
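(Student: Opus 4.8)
\emph{Proof proposal.} The plan is to connect an arbitrary composition $\mu$ to a fixed base point in the same $S_n$-orbit whose symmetrization is already known, and to track how $\widetilde{\sigma}$ transforms under the intertwiners used in the proof of Theorem \ref{thm5}. Set $\lambda = \mathrm{sort}(\mu)$, $n = \ell(\mu)$, and let $k = \ell(\lambda)$ be the number of nonzero parts of $\mu$, so that $\mu$ is a rearrangement of the length-$n$ composition $\lambda * 0^{n-k}$. By the Remark following Theorem \ref{thm4} we have $\widetilde{E}_{\lambda * 0^{n-k}} = \widetilde{E}_{\lambda}$, hence $\widetilde{\sigma}(\widetilde{E}_{\lambda * 0^{n-k}}) = \mathcal{A}_{\lambda}$ by Definition \ref{defn7}. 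It therefore suffices to show that for every covering relation $\nu \lessdot s_i\nu$ inside the $S_n$-orbit of $\lambda * 0^{n-k}$, the quantities $\widetilde{\sigma}(\widetilde{E}_{\nu})$ and $\widetilde{\sigma}(\widetilde{E}_{s_i\nu})$ differ by an explicit nonzero scalar. Since this orbit is connected under covering relations, all the $\widetilde{\sigma}(\widetilde{E}_{\nu})$ are then nonzero scalar multiples of one another, and comparison with the base point $\nu = \lambda * 0^{n-k}$ yields the claim; when $\mu$ is a partition one has $\mu = \lambda$ and the chain is empty, forcing $\gamma_\mu = 1$.

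The per-step scalar comes from rewriting the intertwiner identity established in the proof of Theorem \ref{thm5}. For a covering relation $s_i\nu > \nu$ that identity rearranges to
$$\widetilde{E}_{s_i\nu} = T_i \widetilde{E}_{\nu} + \frac{(1-t)\widetilde{\alpha}_{\nu}(i+1)}{\widetilde{\alpha}_{\nu}(i) - \widetilde{\alpha}_{\nu}(i+1)}\,\widetilde{E}_{\nu}.$$
Granting the key fact that $\widetilde{\sigma}\,T_i = \widetilde{\sigma}$ (stated below), applying $\widetilde{\sigma}$ gives
$$\widetilde{\sigma}(\widetilde{E}_{s_i\nu}) = \frac{\widetilde{\alpha}_{\nu}(i) - t\,\widetilde{\alpha}_{\nu}(i+1)}{\widetilde{\alpha}_{\nu}(i) - \widetilde{\alpha}_{\nu}(i+1)}\,\widetilde{\sigma}(\widetilde{E}_{\nu}).$$
Both factors are nonzero by the explicit eigenvalues of Lemma \ref{lem1}: since $s_i\nu \neq \nu$ we have $\nu_i \neq \nu_{i+1}$, so the denominator $\widetilde{\alpha}_{\nu}(i) - \widetilde{\alpha}_{\nu}(i+1)$ is nonzero (distinct weights), while the numerator can vanish only if $q^{\nu_i - \nu_{i+1}}$ equals an integer power of $t$, which for generic $q,t$ again forces $\nu_i = \nu_{i+1}$ and is excluded. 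Multiplying these factors along a saturated chain from $\lambda * 0^{n-k}$ to $\mu$ produces $\gamma_\mu$, manifestly a nonzero element of $\mathbb{Q}(q,t)$.

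The main obstacle is the key identity $\widetilde{\sigma}\,T_i = \widetilde{\sigma}$ on $\mathscr{P}_{as}^{+}$, equivalently that $\widetilde{\sigma}$ annihilates the image of $T_i - 1$, i.e. the $(-t)$-eigenspace of each Demazure--Lusztig operator. I would prove it by commuting $T_i - 1$ rightward through the factorization $\widetilde{\sigma} = \partial_{-}^{(1)}\cdots \partial_{-}^{(n)}$. Because $\partial_{-}^{(j)}$ is $\mathscr{P}_{j-1}^{+}$-linear and $T_i$ only involves $x_i, x_{i+1}$, one checks directly that $\partial_{-}^{(j)} T_i = T_i \partial_{-}^{(j)}$ whenever $j \geq i+2$, which reduces the statement to the single boundary relation $\partial_{-}^{(m-1)}\partial_{-}^{(m)}(T_{m-1} - 1) = 0$ with $m = i+1$. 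Again by $\mathscr{P}_{m-2}^{+}$-linearity this collapses to the two-variable case, which is a concrete computation using the explicit action of $\partial_{-}^{(k)}$ and the Jing operators $\mathscr{B}_n(F)[X] = \langle z^n\rangle F[X - z^{-1}]\,\mathrm{Exp}[(1-t)zX]$ of Definition \ref{defn6}: symmetrically absorbing $x_{m-1}$ and $x_m$ into the plethystic tail is insensitive to the $T_{m-1}$-twist, in the same way that a Hecke symmetrizer absorbs a generator up to its eigenvalue. I expect this boundary computation to be the most delicate point, and I would isolate it as a standalone lemma proved from the Jing formula (a small check of the form $\widetilde{\sigma}(x_{m} - t\,x_{m-1}) = 0$ already exhibits the mechanism).
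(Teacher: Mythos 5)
The paper states Lemma \ref{lem4} without any proof (it is one of the arguments omitted from this extended abstract), so there is no in-paper argument to compare against; judged on its own terms, your proposal is sound and uses exactly the machinery the paper sets up. The recursion step is correct: the identity $\varphi_i\widetilde{E}_{\nu}=(\widetilde{\alpha}_{\nu}(i)-\widetilde{\alpha}_{\nu}(i+1))\widetilde{E}_{s_i\nu}$ from the proof of Theorem \ref{thm5} rearranges as you say (the denominator is nonzero since $\nu_i\neq\nu_{i+1}$ forces distinct $q$-powers, or one eigenvalue zero and the other not, by Lemma \ref{lem1}), the resulting multiplier $(\widetilde{\alpha}_{\nu}(i)-t\widetilde{\alpha}_{\nu}(i+1))/(\widetilde{\alpha}_{\nu}(i)-\widetilde{\alpha}_{\nu}(i+1))$ is nonzero for generic $q,t$ by the same eigenvalue formulas, and since each covering step is an invertible scalar the connectivity of the $S_n$-orbit lets you reach the base point $\mathrm{sort}(\mu)*0^{n-k}$, where $\widetilde{E}_{\mathrm{sort}(\mu)*0^{n-k}}=\widetilde{E}_{\mathrm{sort}(\mu)}$ gives $\gamma=1$. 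The only deferred ingredient is the identity $\widetilde{\sigma}T_i=\widetilde{\sigma}$, and here you do not need to carry out the delicate boundary computation from scratch: the commutation $\partial_{-}^{(j)}T_i=T_i\partial_{-}^{(j)}$ for $j\geq i+2$ and the collapse relation $\partial_{-}^{(i)}\partial_{-}^{(i+1)}T_i=\partial_{-}^{(i)}\partial_{-}^{(i+1)}$ are precisely the relations $T_id_-=d_-T_i$ and $d_-^2T_{k-1}=d_-^2$ of the double Dyck path algebra $\mathbb{A}_{q,t}$, which hold in the standard representation of Carlsson--Mellit and hence for Ion--Wu's $\partial_{-}^{(k)}$ operators via the dictionary the paper already invokes; citing this is cleaner and safer than redoing the Jing-operator computation (your sanity check $\widetilde{\sigma}(x_2-tx_1)=0$ is correct and is a good illustration of the mechanism). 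I would only ask you to state explicitly that the intertwiner identity is applied only in the direction $s_i\nu>\nu$ of each cover and then inverted, so that no case analysis on the direction of traversal is needed.
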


We can now construct a full $\y$-weight basis of $\mathscr{P}_{as}^{+}$. We parameterize this basis by pairs $(\mu | \lambda)$ for $\mu$ a composition and $\lambda$ a partition.

\begin{defn} \label{defn8}
For $\mu$ be a composition and $\lambda$ a partition define the \textbf{\textit{stable-limit non-symmetric Macdonald function}} corresponding to $(\mu|\lambda)$ as
$$\widetilde{E}_{(\mu|\lambda)} :=  \partial_{-}^{(\ell(\mu)+1)}\cdots \partial_{-}^{(\ell(\mu)+\ell(
\lambda))} \widetilde{E}_{\mu*\lambda}. $$
\end{defn}

\begin{remark}
\noindent Note importantly $\widetilde{E}_{(\mu|\lambda)} \in \mathscr{P}(\ell(\mu))^{+}$, $\widetilde{\sigma}(\widetilde{E}_{(\mu|\lambda)} ) = \widetilde{\sigma}(\widetilde{E}_{\mu*\lambda})$, and $\widetilde{E}_{(\mu|\lambda)}$ is homogeneous of degree $|\mu| + |\lambda|$. Further, for any composition $\mu$ and partition $\lambda$ we have $\widetilde{E}_{(\mu|\emptyset)} = \widetilde{E}_{\mu}  $ and $\widetilde{E}_{(\emptyset |\lambda)} = \mathcal{A}_{\lambda}$.
\end{remark}

The following simple lemma shows that the stable-limit non-symmetric Macdonald functions $\widetilde{E}_{(\mu|\lambda)}$ are $\y$-weight vectors.

\begin{lem} \label{lem5}
Suppose $f \in \mathcal{P}(k)^{+}$ is a $\y$-weight vector with weight $(\alpha_1,\ldots,\alpha_k,0,0,\ldots)$. Then $\partial_{-}^{(k)} f \in \mathcal{P}(k-1)^{+}$ is a $\y$-weight vector with weight $(\alpha_1,\ldots,\alpha_{k-1},0,0,\ldots)$. 
\end{lem}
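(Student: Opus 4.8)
The plan is to reduce the statement to two commutation facts between the lowering operator $\partial_-^{(k)}$ and the limit Cherednik operators, and then finish with a one-line eigenvalue computation. Writing $\y_k = t^{-(k-1)} T_{k-1}\cdots T_1\, \y_1\, T_1\cdots T_{k-1}$ (which follows from $T_i\y_iT_i = t\y_{i+1}$, valid since the $\y_i$ realize the $\mathscr{H}^{+}$-action), everything comes down to how $\partial_-^{(k)}$ interacts with $\y_1$ and with the Demazure--Lusztig operators $T_i$. Concretely I want to prove: (a) for $1\le i\le k-1$, $\y_i\,\partial_-^{(k)} = \partial_-^{(k)}\,\y_i$; and (b) for $i\ge k$, $\y_i$ annihilates $\mathscr{P}(k-1)^{+}$. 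Granting these, the lemma is immediate: for $i\le k-1$ we get $\y_i\partial_-^{(k)}f = \partial_-^{(k)}\y_i f = \alpha_i\,\partial_-^{(k)}f$, while for $i\ge k$, since $\partial_-^{(k)}f\in\mathscr{P}(k-1)^{+}$ by construction of $\partial_-^{(k)}$, fact (b) gives $\y_i\partial_-^{(k)}f = 0$.

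For (a) I would first record two locality properties of $\partial_-^{(k)}$. Since $\partial_-^{(k)}$ is $\mathscr{P}_{k-1}^{+}$-linear and compatible with permutations of $x_1,\dots,x_{k-1}$, it commutes with each $T_i$ for $1\le i\le k-2$. The conjugation formula above then reduces the commutativity $[\y_i,\partial_-^{(k)}]=0$ for all $i\le k-1$ to the single relation $[\y_1,\partial_-^{(k)}]=0$, because the conjugating factors $T_1,\dots,T_{i-1}$ all carry index $\le k-2$. For (b) I would run an induction resting on the observation that every element of $\mathscr{P}(k-1)^{+}$ is symmetric in $x_k,x_{k+1},\dots$; in particular $T_k$ acts as the identity on $\mathscr{P}(k-1)^{+}$, since its Demazure correction term vanishes on functions symmetric in $x_k,x_{k+1}$. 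Hence $\y_{k+1}=t^{-1}T_k\y_k T_k$ preserves the vanishing, and more generally $\y_i|_{\mathscr{P}(k-1)^{+}}=0$ for all $i\ge k$ follows once the base case $\y_k|_{\mathscr{P}(k-1)^{+}}=0$ is established.

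The hard part is precisely the two facts that genuinely involve the global operator $\y_1$, namely $[\y_1,\partial_-^{(k)}]=0$ and $\y_k|_{\mathscr{P}(k-1)^{+}}=0$; both concern the interaction of the $\rho$-twisted limit defining $\y_1$ with the Jing/Hall--Littlewood recipe defining $\partial_-^{(k)}$. Here I would invoke the explicit description of $\y_1$ from Prop.~6.21 of \cite{Ion_2022} (the same input used in Lemma~\ref{lem2}) together with the identification of $\partial_-^{(k)}$ with the Carlsson--Mellit operator $d_{-}$, checking the relations on the spanning set $x_1^{a_1}\cdots x_{k-1}^{a_{k-1}}x_k^{n}F[x_{k+1}+\cdots]$. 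The vanishing $\y_k|_{\mathscr{P}(k-1)^{+}}=0$ should reflect that $\y_k$ carries the conjugated projector $\rho$ into the $k$-th slot, which kills terms symmetric in $x_k,x_{k+1},\dots$ (hence carrying no isolated positive $x_k$-degree); making this precise, rather than the bookkeeping in (a) and (b), is where the real work lies.
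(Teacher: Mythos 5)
Your proposal follows essentially the same route as the paper's own (sketched) proof: both arguments reduce the lemma to the two facts that $\y_i$ commutes with $\partial_-^{(k)}$ for $1\le i\le k-1$ and that $\y_i$ annihilates $\mathscr{P}(k-1)^{+}$ for $i\ge k$, and then conclude by the identical one-line eigenvalue computation. The paper simply asserts these two facts, while you additionally outline how to reduce them (via conjugation by $T_j$ and the triviality of $T_k$ on $\mathscr{P}(k-1)^{+}$) to the cases $i=1$ and $i=k$, deferring those to Prop.\ 6.21 of Ion--Wu just as Lemma~\ref{lem2} does; this is a reasonable, faithful elaboration of the same argument.
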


\begin{proof}[Proof Sketch]
We know that for $g \in \mathcal{P}(k)^{+}$ and $1\leq i \leq k-1$, $\y_i \partial_{-}^{(k)} g = \partial_{-}^{(k)}\y_i g$ so $\y_i\partial_{-}^{(k)} f = \partial_{-}^{(k)}\y_i f = \alpha_i \partial_{-}^{(k)} f.$
One can show that if $i\geq k$ then $\y_i$ annihilates $\mathscr{P}(k-1)$.
Since $\partial_{-}^{(k)}f \in \mathcal{P}(k-1)^{+}$ for all $i \geq k$, $\y_i \partial_{-}^{(k)}f = 0$.

\end{proof}

Here we give a few basic examples of stable-limit non-symmetric Macdonald functions expanded in the Hall-Littlewood basis $\mathcal{P}_{\lambda}$ and their corresponding weights:
\begin{itemize}
    \item $\widetilde{E}_{(\emptyset|2)} = \mathcal{P}_{2}[x_1+\cdots] +\frac{q^{-1}}{1-q^{-1}t}\mathcal{P}_{1,1}[x_1+\cdots]$ and has weight $(0,0,\ldots)$
    \item $\widetilde{E}_{(0|2)} = \mathcal{P}_2[x_2+\cdots]+(1-t)x_1^2+\frac{q^{-1}}{1-q^{-1}t}\mathcal{P}_{1,1}[x_2+\cdots]+\frac{(1+q^{-1})(1-t)}{1-q^{-1}t}x_1\mathcal{P}_1[x_2+\cdots]$ and has weight $(0,q^2t,0,\ldots)$
    \item $\widetilde{E}_{(1|1,1)} = x_1\mathcal{P}_{1,1}[x_2+\cdots]$ and has weight $(qt^3,0,\ldots)$
\end{itemize}

Finally, we prove that the stable-limit non-symmetric Macdonald functions are a basis for $\mathscr{P}_{as}^{+}$.

\begin{thm}(Main Theorem) \label{thm7}
The $\widetilde{E}_{(\mu|\lambda)}$ are a $\y$-weight basis for $\mathcal{P}_{as}^{+}$.
\end{thm}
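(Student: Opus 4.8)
The plan is to verify first that each $\widetilde{E}_{(\mu|\lambda)}$ is a $\y$-weight vector and then to establish the basis property by separating it into linear independence and a dimension count. For the weight-vector claim, Theorem \ref{thm5} gives that $\widetilde{E}_{\mu*\lambda}$ is a $\y$-weight vector with weight $\widetilde{\alpha}_{\mu*\lambda}$, and applying Lemma \ref{lem5} once for each of the operators $\partial_-^{(\ell(\mu)+\ell(\lambda))},\ldots,\partial_-^{(\ell(\mu)+1)}$ shows that $\widetilde{E}_{(\mu|\lambda)}$ is a $\y$-weight vector whose weight $\widetilde{\alpha}_{(\mu|\lambda)}$ is the restriction of $\widetilde{\alpha}_{\mu*\lambda}$ to its first $\ell(\mu)$ coordinates, with all remaining coordinates equal to $0$. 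A small but essential bookkeeping point is that, because $\widetilde{E}_{\mu}=\widetilde{E}_{\mu*0^m}$, the relevant index set consists of pairs in which $\mu$ carries no trailing zeros (equivalently $\mu=\emptyset$ or $\mu_{\ell(\mu)}\neq 0$); I would first record this reduction and prove the statement for this non-redundant parameterization.

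The linear independence I would obtain by combining the weights with the symmetrization operator $\widetilde{\sigma}$. Suppose a finite combination $\sum c_{(\mu|\lambda)}\widetilde{E}_{(\mu|\lambda)}=0$. Since $\y$-weight vectors for distinct weights are linearly independent, it suffices to treat the sub-sum lying in a single weight space. Using the explicit formula of Lemma \ref{lem1}, the value of $\widetilde{\alpha}_{(\mu|\lambda)}$ at position $i$ is $q^{\mu_i}t^{c_i}$ for suitable exponents $c_i$ wherever $\mu_i\neq 0$ and is $0$ elsewhere; as $q,t$ are algebraically independent, the $q$-exponents recover the parts of $\mu$, and, since $\mu$ has no trailing zeros, the largest position carrying a nonzero value is exactly $\ell(\mu)$. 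Hence the weight determines $\mu$ completely, so a fixed weight space meets our family only in the $\widetilde{E}_{(\mu|\lambda)}$ with one fixed $\mu$ and varying $\lambda$. Applying $\widetilde{\sigma}$ to this sub-sum and invoking Lemma \ref{lem4} together with the Remark after Definition \ref{defn8} turns it into $\sum_{\lambda} c_{(\mu|\lambda)}\gamma_{\mu*\lambda}\mathcal{A}_{\mathrm{sort}(\mu*\lambda)}=0$. With $\mu$ fixed, distinct $\lambda$ yield distinct partitions $\mathrm{sort}(\mu*\lambda)$, the scalars $\gamma_{\mu*\lambda}$ are nonzero, and the $\mathcal{A}$'s are linearly independent by Theorem \ref{thm6}; thus every $c_{(\mu|\lambda)}=0$.

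For spanning I would argue degree by degree inside each finite-dimensional piece $\mathscr{P}(k)^{+}$. Every $\widetilde{E}_{(\mu|\lambda)}$ with $\ell(\mu)\leq k$ and $|\mu|+|\lambda|=d$ lies in the homogeneous component $\mathscr{P}(k)^{+}_d$, and such functions are linearly independent by the previous paragraph. Writing $p(m)$ for the number of partitions of $m$, a direct count shows that the number of admissible pairs $(\mu|\lambda)$ with $\mu$ having no trailing zeros, $\ell(\mu)\leq k$, and $|\mu|+|\lambda|=d$ equals $\sum_{e=0}^{d}\binom{e+k-1}{k-1}p(d-e)$, which is exactly $\dim_{\mathbb{Q}(q,t)}\mathscr{P}(k)^{+}_d$; the key combinatorial identity is that the number of no-trailing-zero compositions of $e$ of length at most $k$ is $\binom{e+k-1}{k-1}$, the number of monomials of degree $e$ in $x_1,\ldots,x_k$, by a hockey-stick summation. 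A linearly independent set of the correct cardinality inside a finite-dimensional space is a basis, so these functions span $\mathscr{P}(k)^{+}_d$; letting $k$ and $d$ vary and using $\mathscr{P}_{as}^{+}=\bigcup_k\mathscr{P}(k)^{+}$ yields the full $\y$-weight basis.

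The main obstacle is the linear independence step, and specifically the reduction of each $\y$-weight space to a family controlled by $\widetilde{\sigma}$. This rests on two things: the precise weight computation of Lemma \ref{lem1} together with the vanishing of the higher $\y_i$ on $\mathscr{P}(\ell(\mu))^{+}$ coming from Lemma \ref{lem5}, so that the weight genuinely recovers $\mu$ and in particular $\ell(\mu)$; and the compatibility $\widetilde{\sigma}(\widetilde{E}_{(\mu|\lambda)})=\gamma_{\mu*\lambda}\mathcal{A}_{\mathrm{sort}(\mu*\lambda)}$ with $\gamma_{\mu*\lambda}\neq 0$, where Lemma \ref{lem4} and the unitriangularity of the $\mathcal{A}_\lambda$ in the Hall-Littlewood basis (Theorem \ref{thm6}) do the real work. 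The delicate part is the interplay between the trailing-zero normalization of $\mu$ and the fact that many different pairs share the same weight; getting this bookkeeping exactly right, rather than any individual computation, is the crux.
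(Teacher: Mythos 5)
Your proof is correct and follows essentially the same route as the paper's: weight considerations (Lemmas \ref{lem1} and \ref{lem5}) reduce any dependence relation to one with a single fixed $\mu$, and then $\widetilde{\sigma}$ together with Lemma \ref{lem4} and Theorem \ref{thm6} forces the coefficients to vanish. The only difference is that you spell out two points the paper's sketch leaves implicit---the trailing-zero normalization of $\mu$ needed so that a weight determines $\mu$ and so that the index set is non-redundant, and the graded dimension count behind the phrase ``sufficiently many''---and both of these check out.
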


\begin{proof}[Proof Sketch]
As there are sufficiently many $\widetilde{E}_{(\mu|\lambda)}$ in each graded component of every $\mathcal{P}(k)^{+}$ it suffices to show that these functions are linearly independent. Obviously weight vectors in distinct weight spaces are linearly independent. Using Lemmas \ref{lem2} and \ref{lem5}, we deduce that if $\widetilde{E}_{(\mu_1|\lambda_1)}$ and $\widetilde{E}_{(\mu_2|\lambda_2)}$ have the same weight then necessarily $\mu_1 = \mu_2$. Hence, we can restrict to the case where we
have a dependence relation
$$ c_1\widetilde{E}_{(\mu|\lambda^{(1)})} +\cdots + c_N \widetilde{E}_{(\mu|\lambda^{(N)})} = 0 $$
for $\lambda^{(1)},\ldots,\lambda^{(N)}$ distinct partitions.
By applying the stable-limit symmetrization operator we see that 
$$ \widetilde{\sigma}(c_1\widetilde{E}_{(\mu|\lambda^{(1)})} +\cdots + c_N \widetilde{E}_{(\mu|\lambda^{(N)})} ) = \widetilde{\sigma}(c_1\widetilde{E}_{\mu*\lambda^{(1)}} +\cdots+c_N \widetilde{E}_{\mu*\lambda^{(N)}}) = 0 .$$
Now by Lemma \ref{lem4}, $\widetilde{\sigma}(\widetilde{E}_{\mu*\lambda^{(i)}}) = \gamma_{\mu*\lambda^{(i)}}  \mathcal{A}_{\text{sort}(\mu*\lambda^{(i)})} $ with nonzero scalars  $\gamma_{\mu*\lambda^{(i)}}$ so 
$$ 0 = c'_1\mathcal{A}_{\text{sort}(\mu*\lambda^{(1)})} +\ldots+ c'_n \mathcal{A}_{\text{sort}(\mu*\lambda^{(N)})}.$$ The partitions $\lambda^{(i)}$ are distinct so we know that the partitions $\text{sort}(\mu*\lambda^{(i)})$ are distinct as well. By Theorem \ref{thm6} the symmetric functions $\mathcal{A}_{\text{sort}(\mu*\lambda^{(i)})}$ are linearly independent. Thus $c'_i = 0$ implying $c_i = 0$ for all $1 
\leq i \leq N$ as desired.
\end{proof}

\printbibliography


\end{document}